\documentclass[leqno,11pt]{amsart}

\usepackage{amssymb}
\usepackage{amsmath}
\usepackage{enumerate}
\usepackage[pdftex]{graphicx}
\usepackage{graphicx, color}

\def\R{\mathbb{R}}

\DeclareMathOperator{\sech}{sech}

\newtheorem{theorem}{Theorem}[section]
\newtheorem{proposition}[theorem]{Proposition}
\newtheorem{corollary}[theorem]{Corollary}

\theoremstyle{definition}

\newtheorem{remark}[theorem]{Remark}
\newtheorem{example}[theorem]{Example}

\numberwithin{equation}{section}

\hyphenation{mi-ni-mal geo-me-tric re-vo-lu-tion ge-ne-ra-trix ne-ce-ssa-ry sa-tis-fy ge-ne-ra-tri-ces pres-cri-bing ne-ga-ti-ve li-ne-ar E-xam-ple in-te-res-ting ge-ne-ra-li-zing sy-mme-tric qua-li-ta-ti-ve pa-ra-bo-loid}

\begin{document}

\title[A new approach to rotational Weingarten surfaces]{A new approach to \\ rotational Weingarten surfaces}

\author[P. Carretero]{Paula Carretero}
\address{Departamento de Matem\'aticas \\
Universidad de Ja\'{e}n \\
23071 Ja\'{e}n, Spain.}
\email{pch00005@red.ujaen.es}

\author[I. Castro]{Ildefonso Castro}
\address{Departamento de Matem\'{a}ticas \\
Universidad de Ja\'{e}n \\
23071 Ja\'{e}n, Spain and IMAG, Instituto de Matem\'aticas de la Universidad de Granada.}
\email{icastro@ujaen.es}


\subjclass[2010]{Primary 53A04, 53A05; Secondary 74B20}

\keywords{Weingarten surfaces, rotational surfaces, principal curvatures, quadric surfaces of revolution, elasticoids}

\date{}

\begin{abstract}
Weingarten surfaces are those whose principal curvatures satisfy a functional relation,
whose set of solutions is called the curvature diagram or the W-diagram of the surface.
Making use of the notion of \textit{geometric linear momentum} of a plane curve, we propose a new approach to the study of rotational Weingarten surfaces in Euclidean 3-space. Our contribution consists of reducing any type of Weingarten condition on a rotational surface to a first order differential equation on the momentum of the generatrix curve. In this line, we provide two new classification results involving a cubic and an hyperbola in the W-diagram of the surface characterizing, respectively, the non-degenerated quadric surfaces of revolution and the \textit{elasticoids}, defined as the rotational surfaces generated by the rotation of the Euler elastic curves around their directrix line. 

As another application of our approach, we deal with the problem of prescribing mean or Gauss curvature on rotational surfaces in terms of arbitrary continuous functions depending on distance from the surface to the axis of revolution. As a consequence, we provide simple new proofs of some classical results concerning rotational surfaces, like Euler's theorem about minimal ones, Delaunay's theorem on constant mean curvature ones, and Darboux's theorem about constant Gauss curvature ones. 
\end{abstract}

\maketitle


\section{Introduction}

 Following \cite{Ch45}, the \textit{Weingarten surfaces} are those whose principal curvatures $\kappa_1$ and $\kappa_2$ satisfy a certain functional relation $\Phi(\kappa_1,\kappa_2) = 0$. 
 The set of solutions of this equation is also called the \textit{curvature diagram} or the $W$\textit{-diagram} of the surface (see \cite{H51}).
 These surfaces were introduced by Weingarten in \cite{W61} and their study plays an important role in classical Differential Geometry (see e.g.\ \cite{H51} and \cite{V59}). Applications of Weingarten surfaces on computer aided design can be found in \cite{BG96}.

In particular, the ones satisfying a linear relation $a \kappa_1 + b\kappa_2 = c$, $a^2+b^2\neq 0$, $c\in \R$, are called \textit{linear Weingarten surfaces}. In this case, the W-diagram is contained in a straight line or degenerates to one point.
This class of Weingarten surfaces include umbilical surfaces, isoparametric surfaces, constant mean curvature and minimal surfaces or those surfaces where one of the principal curvatures is constant.
On the other hand, some types of thin axial symmetric shells subjected to uniform normal pressure are modelled on rotational surfaces whose principal curvatures obey some specific quadratic relations (see e.g.\ \cite{PHM19} and \cite{PM20}). Concretely, the W-diagram is contained in a certain parabola in this case. The general case that the curvature diagram is a standard parabola was studied in \cite{KS05}.

Among the invariant surfaces, precisely the rotational ones are probably the most studied surfaces in Euclidean 3-space. Perhaps the main reason could be they are a nice family where considering firstly interesting geometric properties to attach on any surface, because its geometry can be controlled by the geometry of the generatrix curve. As an illustration, we can mention the classical theorems of Euler \cite{E44}, Delaunay \cite{D41} and Darboux \cite{D90} classifying minimal, constant mean curvature and constant Gauss curvature rotational surfaces in 1744, 1841 and 1890 respectively.
However, the complete classification of rotational linear Weingarten surfaces  has not been achieved surprisingly until 2020 in \cite{LP20a}. 
Some other interesting rotational Weingarten surfaces have been also recently studied in \cite{LP20b} and \cite{LP20c}. We refer to \cite{KS05} (and references therein) for the study of closed rotational Weingarten surfaces satisfying $\kappa_1= c \, \kappa_2^{2n+1}$, generalizing results of Hopf \cite{H51} when $n=0$ and the case of ellipsoids of revolution when $n=1$.
These may be good reasons why rotational surfaces are probably one of the main classes of Weingarten surfaces and continue to deserve attention. We propose in this paper a new approach for their study,
inspired mainly by \cite{CCI16}.

In order to explain our focus, we start recalling that plane curves are uniquely determined, up to rigid motions, by its intrinsic equation giving its curvature $\kappa $ as a function of its arc-length. It is well known that one needs three quadratures in the integration process. In \cite{S99}, David A.\ Singer considered a different sort of problem, proposing
to determine a plane curve if its curvature is given in terms of its position.
Probably, the most interesting solved case in this setting corresponds to the Euler elastic curves (see \cite{S08}), 
whose curvature is proportional to one of the coordinate functions,
e.g.\ $ \kappa(x, z) = c\, x $ for curves in the $xz$-plane.
Motivated by the above question and by the classical elasticae, the authors studied in \cite{CCI16}
the plane curves whose curvature depends on the distance to a line (say the $z$-axis and so $\kappa=\kappa(x)$) and in \cite{CCIs17}
the plane curves whose curvature depends on the distance from a point (say the origin, and so $\kappa=\kappa(r)$, $r =\sqrt{x^2+z^2}$) requiring in both cases the computation of three quadratures too. They also considered the analogous problems in Lorentz-Minkowski plane in \cite{CCIs18} and \cite{CCIs20a}.

The key tool for the study of plane curves whose curvature depends on distance to a line is the notion of \textit{geometric linear momentum} of a plane curve (see \cite{CCIs20b} or Section \ref{Sect2}). It is a smooth function associated to any plane curve, which completely determines it (up to a family of distinguished translations) in relation with its relative position with respect to a fixed line (see Corollary \ref{cor;key}). Geometrically, the geometric linear momentum controls the angle of the Frenet frame of the curve with this fixed line and receives that name because, in physical terms, it can be described as the linear momentum (with respect to the fixed line) of a particle of unit mass with unit speed and trajectory the path of the plane curve. Moreover, it can be interpreted as an anti-derivative of the curvature of the curve when this is expressed as a function of the distance to the fixed line.
 
Therefore it seems to be natural that when one deals with rotational surfaces, generated by the rotation of a plane curve (the generatrix curve) around a coplanar fixed line (the axis of revolution) the geometric linear momentum of the generatrix with respect to the axis of revolution plays a predominant role to control the geometry of the rotational surface. We show it in Corollary \ref{cor:deter}, proving that any rotational surface is uniquely determined, up to translations along the axis of revolution, by the geometric linear momentum of the generatrix curve. This main result is confirmed  when we study the geometry of a rotational surface since both its first and second fundamental forms can be expressed only in terms of the geometric linear momentum and, of course, the (non-constant) distance from the surface to the axis of revolution.
This allows our main contribution in the paper which consists of reducing any type of Weingarten condition on a rotational surface to a first order differential equation on the momentum of the generatrix curve (see Section \ref{Sect3}). We illustrate this procedure analysing under our optics the two types of linear Weingarten surfaces one can find in literature (see Subsections \ref{Sect linear} and \ref{Sect special W-linear}) and we emphasize two new classification results involving a cubic and an hyperbola in the W-diagram of the rotational Weingarten surface (see Theorems \ref{th:W-cubic} and \ref{th:W-hyperbola} respectively) characterizing, respectively, the non-degenerated quadric surfaces of revolution and the \textit{elasticoids} (defined as the rotational surfaces generated by the rotation of the Euler elastic curves around their directrix line). 

As another application, we deal in Section \ref{Sect4} with the problem of prescribing mean or Gauss curvature on rotational surfaces in terms of arbitrary continuous functions depending on distance from the surface to the axis of revolution, providing in Theorem \ref{th:Prescribe H KGauss} one-parameter families of rotational surfaces with prescribed mean or Gaussian curvature. We point out that Kenmotsu \cite{K80} 
constructed a 3-parameter family of surfaces of revolution admitting $H=H(s)$ as the mean curvature for a
given continuous function $H(s)$, $s$ being the arc parameter of the generatrix curve.

As a consequence of Theorem \ref{th:Prescribe H KGauss}, we provide simple new proofs of the above mentioned classical results concerning rotational surfaces like Euler's theorem about minimal ones (see Corollary \ref{cor:Euler Th}), Delaunay's theorem on constant mean curvature ones (see Corollary \ref{cor:Delaunay Th}), and Darboux's theorem about constant Gauss curvature ones (see Corollary \ref{cor:Darboux Th}).


\section{The geometric linear momentum of a plane curve}\label{Sect2}
	
We introduce a smooth function associated to any plane curve, which completely determines it (up to a family of distinguished isometries) in relation with its relative position with respect to a fixed line.

Let $\alpha  : I\subseteq \mathbb R \rightarrow \mathbb R^2 $ be a regular plane curve parametrized by arc-length; that is, $|\dot \alpha(s)|=1, \, \forall s\in I $, where $I$ is some interval in $\mathbb R$. Here $\, \dot{}\, $ means derivation with respect to $s$. 
We denote by $\langle \cdot , \cdot \rangle$ the usual inner product in $\mathbb R^2$. Let $T:=\dot{\alpha}$
be the unit tangent vector to the curve $\alpha$ and let $N:=i\dot{\alpha}$ be the vector orthogonal to $T$ such that
the frame $(T,N)$ is positively oriented. The Frenet equations of $\alpha $ are given by
\begin{equation}\label{eq:FrenetEqs}
\dot T(s)= \kappa (s)  N(s), \quad \dot N(s)= -\kappa (s) T(s),
\end{equation}
where $\kappa=\kappa(s)=\det (\dot \alpha (s), \ddot \alpha (s))$ is the (signed) curvature of $\alpha$. Up to sign, $\kappa=\pm \dot \theta$, where $\theta $ can be chosen as the angle $\angle (T,(1,0))$.

We are interested in the geometric condition that the curvature of $\alpha$ depends on the distance to a fixed line of $\R^2$. 
We choose Cartesian coordinates in $\R^2 $ and we write $\alpha =(x,z)$.
Thus it is enough to study the condition $\kappa=\kappa(x)$, since $x$ represents the signed distance to the $z$-axis. 

A key role is played then by the \textit{geometric linear momentum} of the plane curve $\alpha $ (with respect to the $z$-axis).
At a given point $\alpha (s)$ on the curve, it is defined by 
$$\mathcal K (s) := \langle T(s),(0,1) \rangle  = \sin \theta (s). $$
Geometrically, $\mathcal K$ controls the angle of the Frenet frame of the curve with the coordinate axes. 

We can also observe a physical interpretation if we write $\mathcal K $ in Cartesian coordinates. 
Since $\alpha =(x,z)$, then $T =(\dot x, \dot z)$ and so the linear momentum $\mathcal K$ at $\alpha (s)$ is given by
\begin{equation} \label{momentum K}
\mathcal K(s)= \dot z (s).
\end{equation}
Hence, in physical terms, $\mathcal K= \mathcal K (s)$ may be described as the linear momentum (with respect to the $z$-axis) of a particle of unit mass with unit speed and trajectory $\alpha (s)$.
We point out that $\mathcal K$ assumes values in $[-1,1]$ and it is well defined, up to the sign, depending on the orientation of $\alpha$.

\begin{remark}\label{no ppa}
If the plane curve $\alpha=(x,z)$ is not necessarily parametrized by arc length, i.e. $\alpha = \alpha (t)$, $t$ being any parameter, one can computes the geometric linear momentum $\mathcal K = \mathcal K (t)$ by means of
$$
\mathcal K (t)= \frac{z'(t)}{|\alpha ' (t)|},
$$
where $'$ denotes derivation respect to $t$.
\end{remark}

It is an easy exercise the computation of the geometric linear momenta of some distinguish plane curves for our purposes, using Remark \ref{no ppa} if necessary, some of them located in a determinate position with respect to $z$-axis, that we collect in the following list (see Figure \ref{fig:3curves}):


\begin{example}\label{ex:curves}
\noindent
\begin{enumerate}
\item Vertical lines: $\mathcal K \equiv \pm 1$.
\item Line with slope $\tan \theta_0$, $\theta_0 \in (-\pi/2, \pi/2)$, given by   $z=\tan \theta_0 \, x$: $\mathcal K \equiv \sin \theta_0 \in (-1,1)$.
\item Circle centred at $(a,0)$, $a\in \R$, and radius $R>0$: \\
$\mathcal K (s)=\cos (s/R)=  (x-a)/R=\mathcal K (x)$.
\item Catenary $x=a \cosh (z/a)$, $a>0$: $\mathcal K (x)=a/x$.
\item Cycloid of radius $R>0$ given by $x=R(1-\cos t)$, $z=R(t-\sin t)$, $t\in \R$: $\mathcal K (t)= \sin (t/2)=\sqrt x / \sqrt{2R}= \mathcal K (x)$.
\item Tractrix of height $a>0$ given by $x=a \sech t$, $z=a(t-\tanh t)$, $t\in \R$: $\mathcal K (t)=\tanh t=\sqrt{1-x^2/a^2}=\mathcal K
(x)$.
\end{enumerate}
\end{example} 

\begin{figure}[h!]
\begin{center}
\includegraphics[width=6cm]{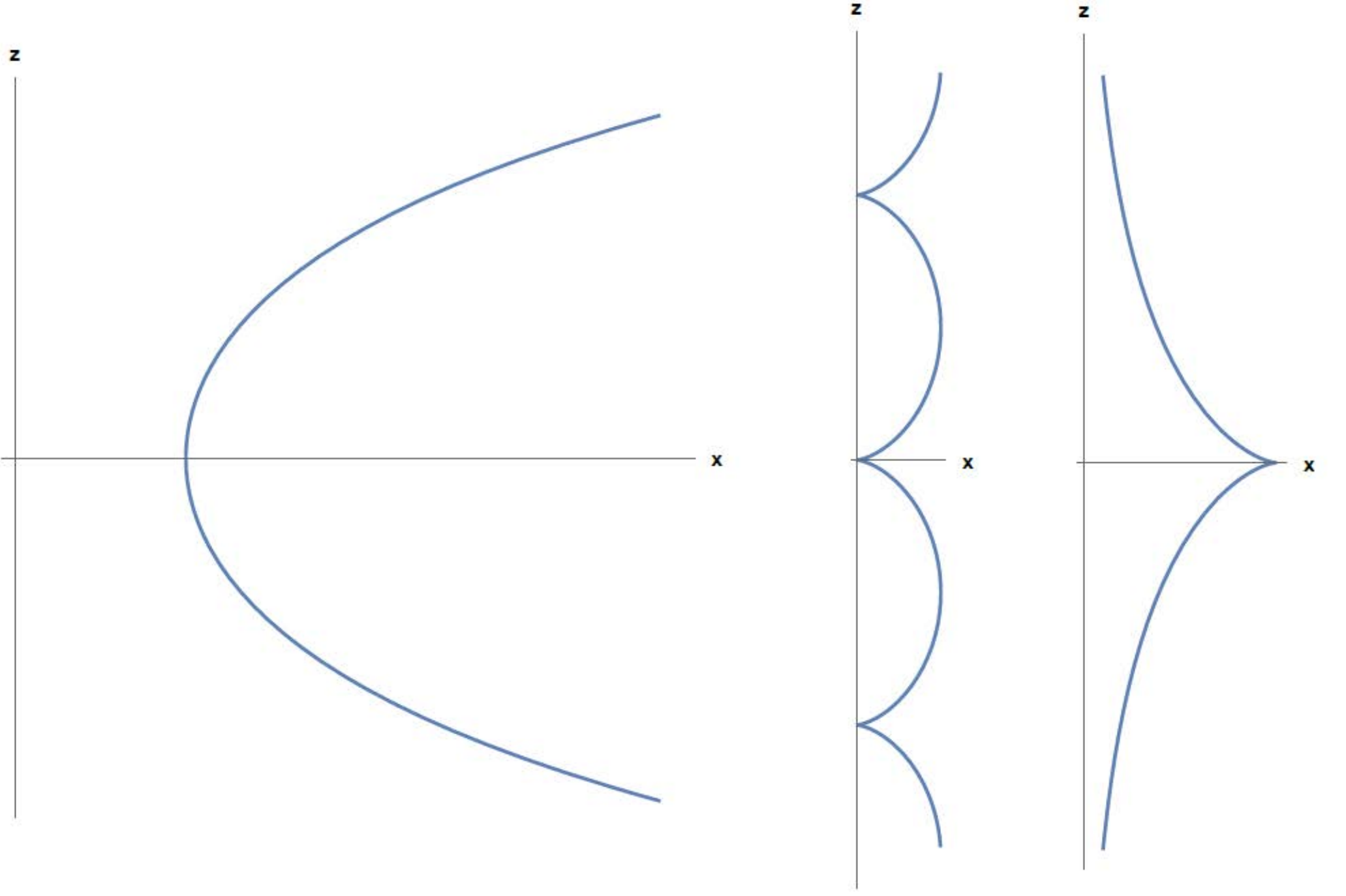}
\caption{From left to right: catenary, cycloid, tractrix.}
\label{fig:3curves}
\end{center}
\end{figure}

On the other hand, when $\alpha$ is unit-speed we have that 
\begin{equation}\label{ppa}
\dot x ^2 + \dot z ^2 =1.
\end{equation}
Using \eqref{momentum K} and \eqref{ppa}, assuming that $x$ is non constant, we get:
\begin{equation}\label{differentials}
ds = \frac{dx}{\sqrt{1-\dot z^2}}=\frac{dx}{\sqrt{1-\mathcal K^2}}, \quad dz = \mathcal K \, ds.
\end{equation}
Thus, given $\mathcal K=\mathcal K (x)$ as an explicit function, looking at \eqref{differentials} one may attempt to compute $x(s)$ and $z (s) $ in three steps: integrate to get $s=s(x)$, invert to get $x=x(s)$ and integrate to get $z =z (s)$. 

In addition, \eqref{eq:FrenetEqs} implies that $\ddot z = \kappa \dot x$. In this way, assuming that $\kappa=\kappa(x)$ and using \eqref{momentum K}, we obtain:
\begin{equation}\label{anti K}
d \mathcal K = \dot{\mathcal K} (s) ds =  \ddot z (s) ds = \kappa (x) \dot x (s)ds = \kappa (x) dx.
\end{equation}
Therefore we deduce that $\mathcal K $ can be interpreted as an anti-derivative of $\kappa= \kappa (x)$.
As a summary, we can determine by quadratures in a constructive explicit way the plane curves such that $\kappa=\kappa(x)$, in the spirit of  \cite[Theorem 3.1]{S99}.
\begin{theorem}\label{Th:k=k(x)}
Let $\kappa=\kappa(x)$ be a continuous function.
Then the problem of determining a curve $(x(s),z(s)) $, with $s$ the arc length parameter, whose curvature is $\kappa(x)$  with geometric linear momentum $\mathcal K (x)$ satisfying \eqref{anti K} ---$x$ representing the (non constant) signed distance to the $z$-axis---
is solvable by quadratures where $x(s)$ and $z(s)$ are obtained through \eqref{differentials} after inverting $s=s(x)$. 

Moreover, such a curve is uniquely determined, up to translations in the $z$-direction (and a translation of the arc parameter $s$), by the geometric linear momentum $\mathcal K(x)$.
\end{theorem}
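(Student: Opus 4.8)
The plan is to realize the construction as a chain of three quadratures --- exactly the integrations announced in \eqref{anti K} and \eqref{differentials} --- and then to read off uniqueness from the observation that, once $\mathcal{K}$ is known as a function of $x$, the arc-length evolution of $x$ is governed by an \emph{autonomous} first-order equation, whose solutions differ only by a shift of $s$.

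First I would recover the momentum from the curvature. Since $\kappa=\kappa(x)$ is continuous, \eqref{anti K} integrates to $\mathcal{K}(x)=\int^{x}\kappa(u)\,du+c_{0}$, the free constant $c_{0}$ reflecting the choice of momentum compatible with the prescribed curvature. Working on an interval where $|\mathcal{K}(x)|<1$ (so that $x$ is genuinely non-constant along the curve), the first relation in \eqref{differentials} gives the second quadrature $s(x)=\int^{x}\frac{du}{\sqrt{1-\mathcal{K}(u)^{2}}}$; as its integrand is continuous and strictly positive, $s(x)$ is strictly increasing, hence invertible by the inverse function theorem, producing $x=x(s)$. The third quadrature is then $z(s)=\int^{s}\mathcal{K}(x(\sigma))\,d\sigma$, coming from $dz=\mathcal{K}\,ds$, and this exhibits $(x(s),z(s))$ explicitly. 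To confirm it is the desired curve I would check that it is unit-speed with the right curvature: by construction $\dot z=\mathcal{K}$ and $\dot x^{2}=1-\mathcal{K}^{2}$, so \eqref{ppa} holds, while differentiating $\dot z=\mathcal{K}(x)$ gives $\ddot z=\mathcal{K}'(x)\dot x=\kappa(x)\dot x$, which is precisely the relation forced by \eqref{eq:FrenetEqs}.

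For uniqueness I would argue directly at the level of the arc-length parametrization. Let $(\tilde x,\tilde z)$ be any other unit-speed curve whose momentum, as a function of position, is the same $\mathcal{K}(x)$; then $\dot{\tilde z}=\mathcal{K}(\tilde x)$ and, by \eqref{ppa}, $\dot{\tilde x}^{2}=1-\mathcal{K}(\tilde x)^{2}$. On an interval where $|\mathcal{K}|<1$ the quantity $\dot{\tilde x}$ cannot vanish, so it keeps a fixed sign; fixing the orientation by the convention $\dot{\tilde x}>0$ that underlies the inversion, this becomes the autonomous first-order equation $\dot{\tilde x}=\sqrt{1-\mathcal{K}(\tilde x)^{2}}$, whose right-hand side is continuous and strictly positive. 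Separation of variables recovers exactly $s=s(\tilde x)+\text{const}$, whence $\tilde x(s)=x(s-s_{0})$ for some constant $s_{0}$; then $\dot{\tilde z}(s)=\mathcal{K}(x(s-s_{0}))=\dot z(s-s_{0})$ forces $\tilde z(s)=z(s-s_{0})+c$. Thus $\mathcal{K}(x)$ determines the curve up to a translation of the arc parameter $s$ and a translation in the $z$-direction, as claimed.

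The step I expect to cause the most trouble is the behaviour at the values where $|\mathcal{K}|=1$. These are exactly the points with vertical tangent ($\theta=\pm\pi/2$, so $\dot x=\cos\theta=0$): there $x$ ceases to be a regular parameter, the integrand of the second quadrature blows up, the sign of $\dot x$ may reverse, and the field $\sqrt{1-\mathcal{K}^{2}}$ loses Lipschitz control, so both the inversion $s\mapsto x$ and the separation-of-variables uniqueness argument break down locally. I would isolate these as the endpoints of the maximal intervals on which $|\mathcal{K}|<1$ and treat the curve there by keeping $s$ (never $x$) as the fundamental parameter, matching the monotone branches so that $T=(\dot x,\dot z)$ varies continuously; on each such interval the three quadratures apply verbatim. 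This is the only place where the otherwise purely computational scheme requires genuine care.
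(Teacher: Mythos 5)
Your proposal is correct and follows essentially the same route as the paper, whose proof consists of the derivation of \eqref{differentials} and \eqref{anti K} preceding the theorem together with the three-quadrature algorithm spelled out in Remark~\ref{c}. The only additions on your side are welcome ones the paper leaves implicit: the verification that the constructed curve is unit-speed with curvature $\kappa(x)$, the separation-of-variables argument pinning down uniqueness up to shifts of $s$ and $z$, and the caveat about the degenerate points where $|\mathcal K|=1$.
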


If we focus on the determining role of the geometric linear momentum, we can rephrase the above result simply as follows: 

\begin{corollary}\label{cor;key}
Any plane curve $\alpha =(x,z)$, with $x$ non-constant, is uniquely determined by its geometric linear momentum $\mathcal K$ as a function of its distance to $z$-axis, that is, by $\mathcal K= \mathcal K(x)$. 
The uniqueness is modulo translations in the $z$-direction.
Moreover, the curvature of $\alpha $ is given by $\kappa (x)=\mathcal K' (x)$.
\end{corollary}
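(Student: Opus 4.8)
The plan is to read off both assertions directly from the differential relations \eqref{differentials} and \eqref{anti K}, which already encode everything needed, so that the corollary becomes essentially a re-reading of Theorem \ref{Th:k=k(x)} with $\mathcal K$ rather than $\kappa$ as the primary datum. I would first dispatch the curvature formula, since it comes for free and, as a bonus, shows a posteriori that the curvature of such a curve depends only on $x$. Writing $N=i\dot\alpha=(-\dot z,\dot x)$ and extracting from $\dot T=\kappa N$ in \eqref{eq:FrenetEqs} the scalar identity $\ddot z=\kappa\,\dot x$, and recalling $\mathcal K=\dot z$ from \eqref{momentum K}, I get $\dot{\mathcal K}=\kappa\,\dot x$. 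On any subinterval where $\dot x\neq 0$ (equivalently $\mathcal K^2<1$) the distance $x$ is a legitimate local parameter, so viewing $\mathcal K=\mathcal K(x)$ and applying the chain rule gives $\dot{\mathcal K}=\mathcal K'(x)\,\dot x$; comparing the two expressions yields $\kappa=\mathcal K'(x)$, which is exactly \eqref{anti K} read as $d\mathcal K=\kappa\,dx$.

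For the determination statement I would reconstruct $\alpha$ from $\mathcal K(x)$ via the three quadratures indicated after \eqref{differentials}. From $ds=dx/\sqrt{1-\mathcal K^2}$ the function $s=s(x)=\int dx/\sqrt{1-\mathcal K(x)^2}$ is fixed up to an additive constant (a shift of the arc parameter); since its integrand is strictly positive where $\mathcal K^2<1$, $s(x)$ is strictly monotone, hence invertible, producing $x=x(s)$. Substituting into $dz=\mathcal K\,ds$ gives $z=z(s)=\int \mathcal K\bigl(x(s)\bigr)\,ds$, determined up to an additive constant, which is precisely a translation in the $z$-direction. Thus the pair $(x(s),z(s))$, and with it the curve, is pinned down once $\mathcal K(x)$ is prescribed, the only remaining freedom being the claimed $z$-translation, the $s$-constant being a mere reparametrization that leaves the image unchanged.

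The step demanding the most care is the behaviour at the critical values $\mathcal K^2=1$, that is, the points with vertical tangent where $\dot x=0$ and $x$ ceases to be an admissible parameter: there the integrand of $s(x)$ is singular and $x(s)$ is no longer locally invertible, so the reconstruction above must be performed on each maximal arc on which $x$ is monotone and then matched across such points. I expect the main obstacle to be checking that this matching introduces no extra freedom; one argues that continuity of $\alpha$ together with the sign choice $\dot x=\pm\sqrt{1-\mathcal K^2}$, forced on each arc by orientation and by continuity of the tangent at the junction, determines the gluing uniquely (again modulo the single global $z$-translation), which is what upgrades the local reconstruction to the genuine uniqueness asserted in the corollary.
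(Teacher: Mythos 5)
Your proposal is correct and follows essentially the same route as the paper: the curvature formula is read off from $\ddot z=\kappa\dot x$ together with $\mathcal K=\dot z$, and uniqueness up to $z$-translation follows from the three-quadrature reconstruction of $(x(s),z(s))$ via \eqref{differentials}, which is exactly how the paper derives Corollary \ref{cor;key} as a restatement of Theorem \ref{Th:k=k(x)}. Your additional care at the points where $\mathcal K^2=1$ goes slightly beyond the paper, which simply restricts to $-1<\mathcal K(x)<1$, but this does not change the argument.
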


\begin{remark}\label{c}
If we prescribe a continuous function $\kappa=\kappa(x)$ as curvature of a plane curve, the proof of Theorem~\ref{Th:k=k(x)} offers an algorithm to recover the curve through the computation of three quadratures, following the sequence:
\begin{enumerate}[\rm (i)]
\item[\rm (i)] A one-parameter family of anti-derivatives of $\kappa (x)$:
$$
\int \! \kappa (x) dz = \mathcal K(x).
$$
\item[\rm (ii)] Arc-length parameter $s$ of $\alpha=(x,z) $ in terms of $x$, defined ---up to translations of the parameter--- by the integral:
$$
s=s(x)=\int\!\frac{dx}{\sqrt{1-\mathcal K(x)^2}},
$$
where $-1<\mathcal K(x)<1$, and
inverting $s=s(x)$ to get $x=x(s)$. 
\item[\rm (iii)]$z$-coordinate of the curve ---up to translations along $z$-axis--- by the integral:
$$
z (s)=\int \! \mathcal K(x(s))\, ds .
$$
\end{enumerate}
We note that we get a one-parameter family of plane curves satisfying $\kappa=\kappa(x)$ according to the linear momentum $\mathcal K (x)$ chosen in {\rm (i)} and verifying $\mathcal K(x)^2 < 1 $. It will distinguish geometrically the curves inside a same family by their relative position with respect to $z$-axis. We recall that we can easily recover $\kappa$ from $\mathcal K$ simply by means of $\kappa =  d\mathcal K/dx$. In some sense, $\mathcal K= \mathcal K(x)$ can be interpreted as the \textit{extrinsic equation} of $\alpha$ in this setting, in contrast to the classical intrinsic equation $\kappa=\kappa(s)$. 
\end{remark}

We now show two illustrative examples applying steps {\rm (i)-(iii)} of the algorithm described in Remark~\ref{c}:

\begin{example}[$\kappa\!\equiv\! 0$]
Then $\mathcal K \equiv c\in (-1,1)$, and easily $x(s)=\sqrt{1-c^2}\,s$, $z(s)=c\,s$. Writing $c=\sin \theta_0$, $\theta_0 \in (-\pi/2, \pi/2)$, we arrive at Example \ref{ex:curves}(2). Up to $z$-translations, we get all the (non vertical) lines in the plane.
\end{example}

\begin{example}[$\kappa \equiv k_0 >0 $]
Then $\mathcal K (x)= k_0 x + c$, and it is not difficult to obtain that $x(s)=\frac{1}{k_0}\left(\sin (k_0 s)-c\right)$, $z(s)=-\frac{1}{k_0}\cos (k_0 s)$. Writing $k_0=1/R$ we arrive at Example \ref{ex:curves}(3). Up to $z$-translations, we get all the circles with radius $R=1/k_0$ in the plane.
\end{example}

\begin{remark}\label{difficulties}
{\rm The main difficulties one can find carrying on the strategy described in Remark~\ref{c}  (or in Theorem~\ref{Th:k=k(x)}) to determine a plane curve whose curvature is $\kappa=\kappa(x)$ are the following:
\begin{enumerate}[\rm (1)]
\item The integration of $s=s(x)$:  Even in the case that $\mathcal K (x)$ were polynomial,
the corresponding integral is not necessarily elementary. For example, when $\mathcal K (x)$ is a quadratic
polynomial, it can be solved using Jacobian elliptic functions (see e.g.\ \cite{BF71}). This last case is
equivalent to $\kappa (x)$ be linear, i.e.\ $\kappa (x)= 2a x+ b$, $a\neq 0$, $b \in \R$. These are precisely the Euler elastic curves (see \cite[Section 3]{CCI16}).
\item The previous integration gives us $s=s(x)$; it is not always possible to obtain explicitly $x=x(s)$, what is necessary to determine the curve.
\item Even knowing explicitly $x=x(s)$, the integration to get $z(s)$ may be impossible to perform using elementary or known functions.
\end{enumerate}
}
\end{remark}

We finish this section with an example illustrating the difficulty (2) in Remark \ref{difficulties}.

\begin{example}\label{ex:parabolas}
Consider $\mathcal K(x)=c/\sqrt x$, $x>c^2>0$. Then $s=s(x)=\sqrt x \sqrt{x-c^2}+c^2 \ln (\sqrt x \sqrt{x-c^2})$ and it is not possible to invert to get $x=x(s)$.
However, eliminating $ds$ in Remark \ref{c}(ii) and (iii), we obtain:
\begin{equation}\label{eq:zgraph}
z=z(x)=\pm \int \frac{\mathcal K(x)dx}{\sqrt{1-\mathcal K(x)^2}}.
\end{equation}
Applying the above formula in this example, we arrive at $z^2=4c^2(x-c^2)$ that corresponds with the parabola with vertex $(c^2,0)$ and focus $(2c^2,0)$ (see Figure \ref{fig:parabola}).
\end{example}

\begin{figure}[h!]
\begin{center}
\includegraphics[width=3cm]{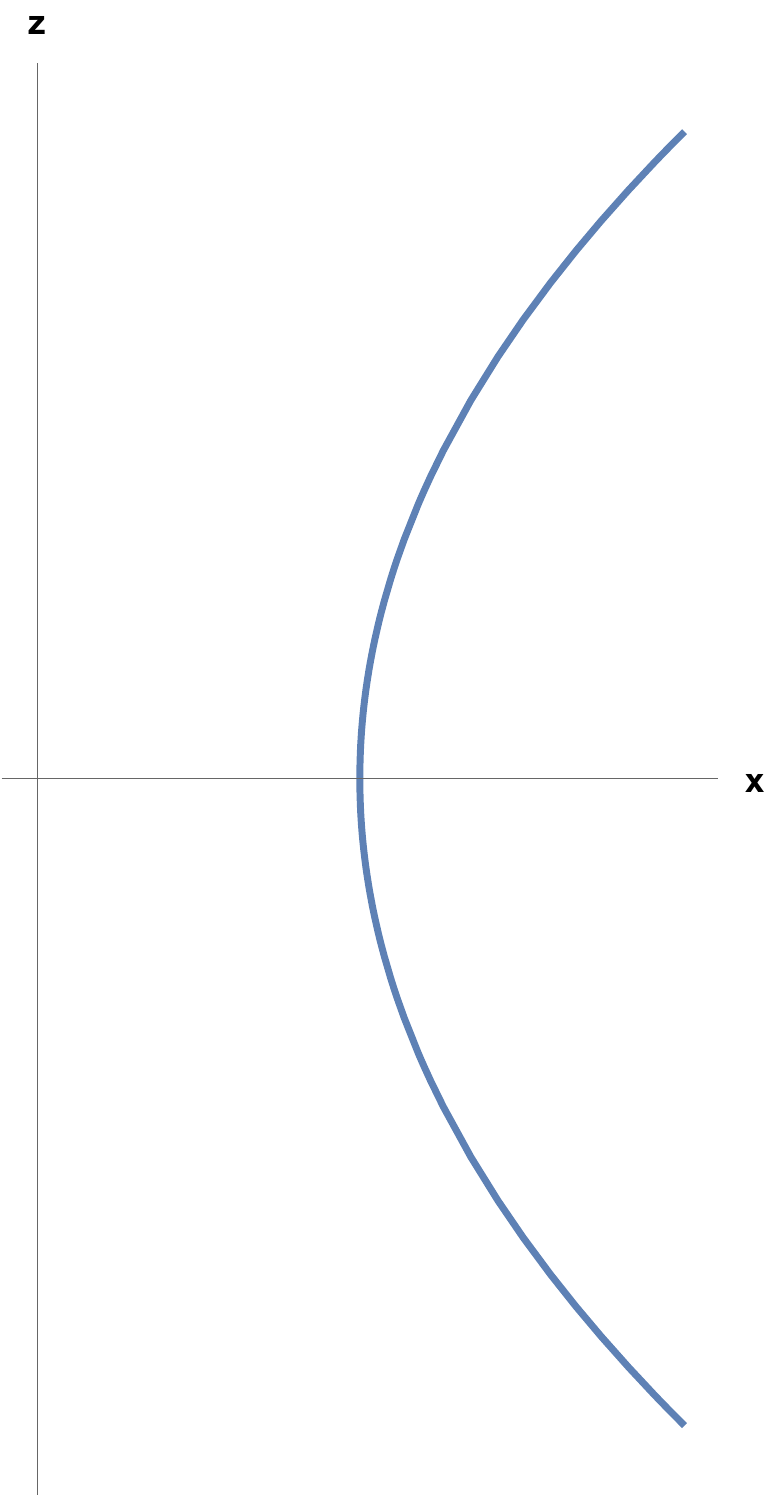}
\caption{Parabola of Example \ref{ex:parabolas}.}
\label{fig:parabola}
\end{center}
\end{figure}

\section{Rotational Weingarten  surfaces}\label{Sect3}

We start this section considering {\em rotational surfaces}, also called
{\em surfaces of revolution}. They are surfaces globally invariant under the action of any rotation around a fixed line called {\em axis of revolution}.
The rotation of a curve (called {\em generatrix}) around a fixed line generates a surface of revolution.
The sections of a surface of revolution by half-planes delimited by the axis of revolution, called {\em meridians}, are special generatrices.
The sections by planes perpendicular to the axis are circles called {\em parallels} of the surface.

We denote $S_\alpha$ the rotational surface in $\R^3$ generated by the rotation around $z$-axis of a plane curve $\alpha$ in the $xz$-plane (see Figure \ref{fig:model}). That is, $\alpha $ is the generatrix curve that we consider parametrized by arc-length, with parametric equations given by $x=x(s)>0$, $y=0$, $z= z(s)$, $s\in I \subseteq \R$. Then $S_\alpha $ is parametrized by
$$ S_\alpha \equiv X(s,\theta)=\left(x(s)  \cos \theta, x(s)\sin \theta, z(s)\right), \
(s,\theta)\in I \times (-\pi,\pi).$$
It is obvious that if we translate the generatrix curve $\alpha$ of a rotational surface $S_\alpha$  along $z$-axis, we obtain a congruent surface to $S_\alpha$. Then, as an immediate consequence of Corollary \ref{cor;key}, we deduce the following key result: 

\begin{figure}[h!]
\begin{center}
\includegraphics[height=4.5cm]{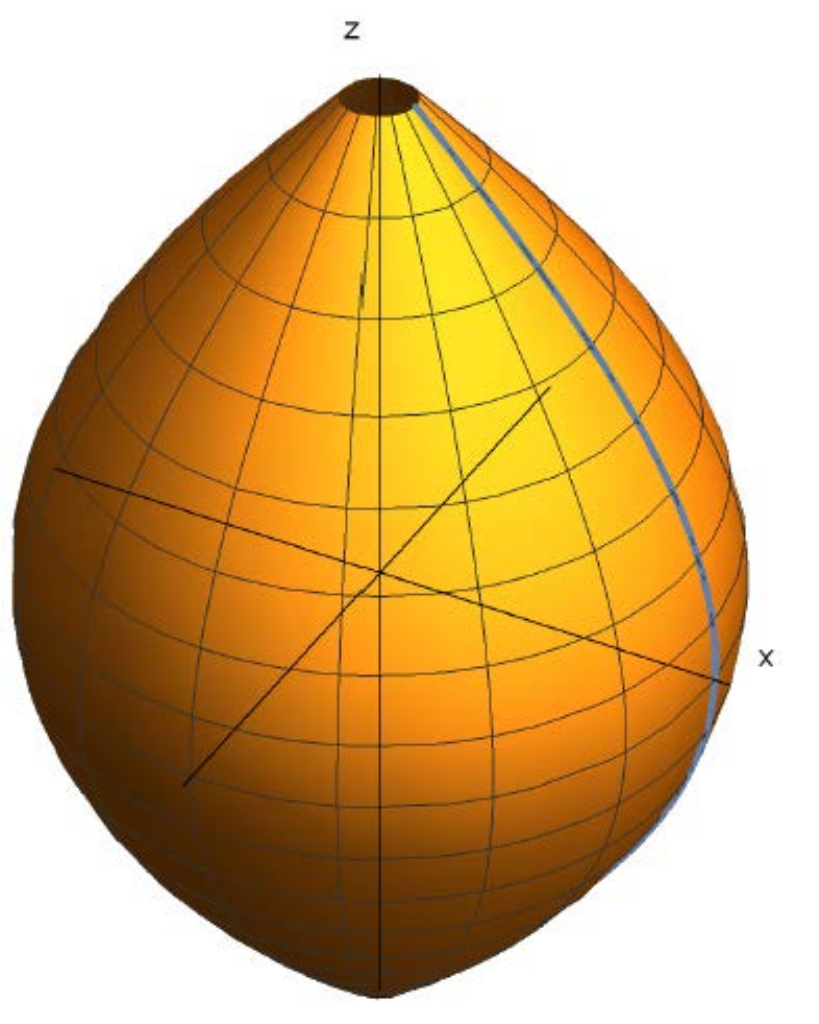}
\caption{$S_\alpha$: rotational surface generated by $\alpha=(x,z)$.}
\label{fig:model}
\end{center}
\end{figure}

\begin{corollary}\label{cor:deter}
Any rotational surface $S_\alpha$, with generatrix curve $\alpha = (x,z)$, is uniquely determined, up to $z$-translations, by the geometric linear momentum $\mathcal K=\mathcal K (x)$ of its generatrix curve, being $x$ non-constant. 
\end{corollary}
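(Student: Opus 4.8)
The plan is to deduce this directly from Corollary~\ref{cor;key}, exploiting the fact that a rotational surface carries exactly the same information as its generatrix curve. First I would observe that the parametrization $X(s,\theta)=(x(s)\cos\theta, x(s)\sin\theta, z(s))$ depends on $\alpha$ only through the pair of functions $x=x(s)$ and $z=z(s)$; hence two generatrices $\alpha$ and $\tilde\alpha$ in the $xz$-plane (with $x>0$) produce the same rotational surface $S_\alpha=S_{\tilde\alpha}$ precisely when they coincide as plane curves. In this way the problem of determining a rotational surface with $x$ non-constant is reduced to that of determining its generatrix.

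Next I would invoke Corollary~\ref{cor;key}: since $x$ is assumed non-constant, the generatrix $\alpha=(x,z)$ is uniquely determined, modulo translations in the $z$-direction, by its geometric linear momentum expressed as a function of the distance to the axis, namely $\mathcal K=\mathcal K(x)$. Thus, once $\mathcal K(x)$ is prescribed, the generatrix is fixed up to the one-parameter group of translations $z\mapsto z+c$, $c\in\R$.

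The last step is to transfer this residual freedom from the curve to the surface. A translation of the generatrix in the $z$-direction, $\alpha\mapsto\alpha+(0,c)$, induces on $S_\alpha$ exactly the rigid translation of $\R^3$ along the axis of revolution, $(x,y,z)\mapsto(x,y,z+c)$; conversely, translating $S_\alpha$ along the $z$-axis amounts to translating its generatrix by the same amount. Since such a translation is an isometry of $\R^3$, the surfaces arising from the $z$-translates of $\alpha$ are mutually congruent. Combining the three steps yields that $S_\alpha$ is determined, up to translations along the axis of revolution, by $\mathcal K=\mathcal K(x)$, as claimed.

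I expect no serious obstacle here, as the statement is essentially a restatement of Corollary~\ref{cor;key} at the level of surfaces. The only point requiring a little care is the bookkeeping in the final step: one must check that the remaining freedom in the generatrix (a pure $z$-translation) corresponds exactly to the congruence (translation along the axis) permitted in the statement, and that no further ambiguity---such as a rotation of $\R^3$ or a reparametrization---is introduced when one passes from the curve to the surface it generates.
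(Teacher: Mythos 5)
Your argument is correct and follows exactly the route the paper takes: the paper deduces the corollary immediately from Corollary~\ref{cor;key} after noting that a $z$-translation of the generatrix produces a surface congruent to $S_\alpha$ via a translation along the axis of revolution. Your write-up merely makes explicit the (straightforward) bookkeeping that the paper leaves implicit.
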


\begin{remark}\label{cylinder}
The only rotational surface excluded in Corollary \ref{cor:deter} is the {\em right circular cylinder}, corresponding to $x$ being constant. We recall that it is a flat rotational surface and its principal curvatures are $0$ (along generatrices) and $1/a$ (along parallel circles), $a>0$ being the radius of the cylinder.
\end{remark}

Looking at Examples \ref{ex:curves} and \ref{ex:parabolas} and making use of Corollary \ref{cor:deter}, we can list the following classical and conventional rotational surfaces joint with its determining geometric linear momentum. See Figure \ref{fig:ExamplesRotSurf}.

\begin{proposition}\label{ex:rot surfaces}
Up to translations in $z$-direction:
\begin{enumerate}
\item Any horizontal plane is uniquely determined by the geometric linear momentum $\mathcal K \equiv 0$.
\item The circular cone with opening $\theta_0 \in (-\pi/2, \pi/2)$, given by $x^2+y^2=\cot^2\theta_0\,z^2$, is uniquely determined by the geometric linear momentum $\mathcal K \equiv \sin \theta_0$.
\item The sphere of radius $R>0$, given by $x^2+y^2+z^2=R^2$, is uniquely determined by the geometric linear momentum $\mathcal K (x)=x/R$. 
\item The torus of revolution with major radius $|a|\neq 0$ and minor radius $R>0$, given by $(\sqrt{x^2+y^2}-a)^2+z^2=R^2$, is uniquely determined by the geometric linear momentum  $\mathcal K (x)=(x-a)/R$.
\item The catenoid of chord $a>0$, given by $x^2+y^2=a^2 \cosh^2 (z/a)$, is uniquely determined by the geometric linear momentum $\mathcal K (x)=a/x$.
\item The onducycloid of radius $R>0$, defined as the surface generated by rotation of the cycloid in Example \ref{ex:curves}(5) around its base,
is uniquely determined by the geometric linear momentum  $\mathcal K (x)= \sqrt x/\sqrt{2R}$.
\item The pseudosphere of pseudoradius $a>0$ (cf.\ \cite{F93}), defined as the surface generated by the rotation of the tractrix in Example \ref{ex:curves}(6) around its asymptote, is uniquely determined by the geometric linear momentum  $\mathcal K (x)= \sqrt{1-x^2/a^2}$.
\item The antiparaboloid of radius $c^2>0$, defined as the surface generated by the rotation of the parabola with vertex $(c^2,0)$ and focus $(2c^2,0)$, $c>0$, around its directrix line,
given by $x^2+y^2=(c^2+z^2/4c^2)^2$, is uniquely determined by the geometric linear momentum  $\mathcal K (x)= c/\sqrt{x}$.
\end{enumerate}
\end{proposition}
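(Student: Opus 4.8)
The plan is to invoke Corollary~\ref{cor:deter} as the single engine of the argument: since every rotational surface $S_\alpha$ with non-constant $x$ is uniquely determined, up to $z$-translations, by the geometric linear momentum $\mathcal{K}=\mathcal{K}(x)$ of its generatrix, it suffices to identify for each listed surface its generatrix curve in the $xz$-plane and to exhibit its linear momentum as a function of $x$. Almost all of these computations have already been carried out, so the proof will largely consist of reading off the relevant entry from Examples~\ref{ex:curves} and~\ref{ex:parabolas} and matching it to the stated surface.

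Concretely, I would proceed item by item. For (1) the generatrix is a horizontal line, i.e.\ Example~\ref{ex:curves}(2) with $\theta_0=0$, giving $\mathcal{K}\equiv 0$; for (2) the generatrix is the line $z=\tan\theta_0\,x$, so Example~\ref{ex:curves}(2) yields $\mathcal{K}\equiv\sin\theta_0$. For (3) the generatrix is the circle centred at the origin, which is the case $a=0$ of Example~\ref{ex:curves}(3), giving $\mathcal{K}(x)=x/R$; for (4) the generatrix is the circle centred at $(a,0)$, so Example~\ref{ex:curves}(3) gives $\mathcal{K}(x)=(x-a)/R$. Items (5), (6) and (7) are exactly the catenary, the cycloid and the tractrix of Example~\ref{ex:curves}(4)--(6), whose momenta $a/x$, $\sqrt{x}/\sqrt{2R}$ and $\sqrt{1-x^2/a^2}$ have been computed there; finally (8) is the parabola of Example~\ref{ex:parabolas}, whose momentum is $c/\sqrt{x}$.

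To turn this list into a proof I would, for each item, verify that rotating the stated generatrix about the $z$-axis reproduces the given implicit Cartesian equation, which amounts to the routine substitution $\sqrt{x^2+y^2}=x_{\mathrm{gen}}(z)$ into the planar equation of the generatrix; for instance the parabola $z^2=4c^2(x-c^2)$ of Example~\ref{ex:parabolas} becomes $x^2+y^2=(c^2+z^2/4c^2)^2$. The uniqueness clause \emph{up to translations in the $z$-direction} is then immediate from Corollary~\ref{cor:deter}. I do not expect any genuine obstacle here: the only mild care is needed for the non-standard surfaces (6)--(8), where one must first recognise the named surface---onducycloid, pseudosphere, antiparaboloid---as the revolution of the corresponding planar curve before quoting the momentum. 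The whole statement is therefore a catalogue whose content lies entirely in the preceding examples and in Corollary~\ref{cor:deter}.
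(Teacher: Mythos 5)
Your proposal is correct and coincides with the paper's own (implicit) argument: the proposition is stated there precisely as a direct consequence of Examples \ref{ex:curves} and \ref{ex:parabolas} together with Corollary \ref{cor:deter}, exactly as you describe. The only content beyond quotation is the routine check that each rotated generatrix yields the stated Cartesian equation, which you correctly identify and handle.
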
 

\begin{figure}[h!]
\begin{center}
\includegraphics[height=4.5cm]{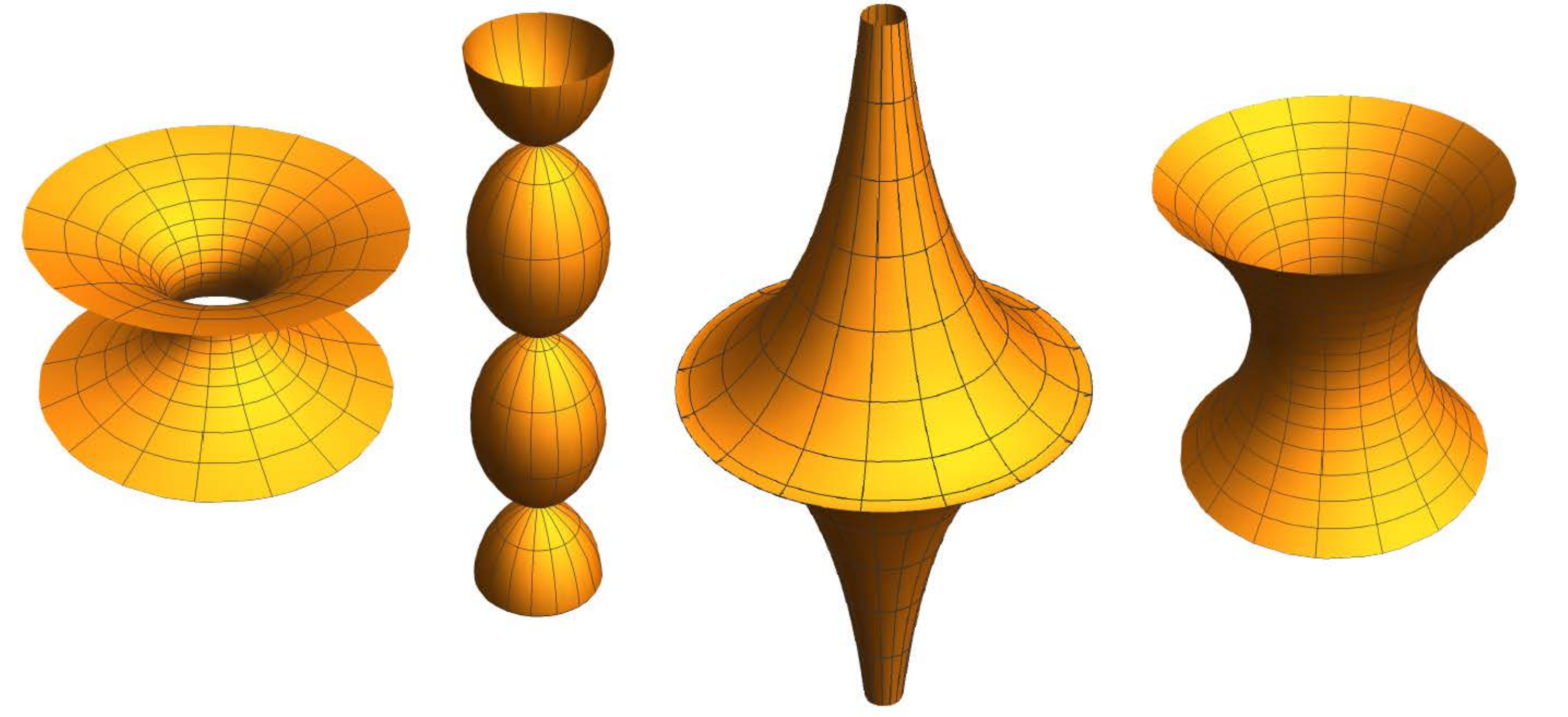}
\caption{Rotational surfaces. From left to right: catenoid, onducycloid, pseudosphere, antiparaboloid.}
\label{fig:ExamplesRotSurf}
\end{center}
\end{figure}

We can confirm the result established in Corollary \ref{cor:deter} when we study the geometry of $S_\alpha$ through its first and second fundamental forms, $I$ and $II$, since a direct computation, using that $\kappa (x)=\mathcal K ' (x) $, shows that both can be expressed only in terms of the geometric linear momentum $\mathcal K$ and, of course, the non constant distance $x$ from the surface to the axis of revolution:
 $$ I\equiv ds^2 + x^2 d\theta ^2, \quad II\equiv \mathcal K ' (x) ds^2 + x  \mathcal K (x) d\theta^2. $$

Therefore we get the following expressions for the principal curvatures $\kappa_1$ and $\kappa_2$, whose curvature lines are the meridians and parallels respectively of the rotational surface $S_\alpha$:
\begin{equation}\label{eq:prin curv}
\kappa_1\equiv k_{\text m}= \mathcal K ' (x), \quad \kappa_2\equiv k_{\text p}= \frac{\mathcal K (x)}{x}.
\end{equation}
Thus, the mean curvature $H$ of $S_\alpha $ is given by
\begin{equation}\label{eq:H}
2H=\mathcal K ' (x)+\frac{\mathcal K (x)}{x},
\end{equation}
and the Gauss curvature $K_{\text G}$ of $S_\alpha $ is given by
\begin{equation}\label{eq:KGauss}
K_{\text G}=\frac{\mathcal K (x) \mathcal K ' (x) }{x}.
\end{equation}

Now we can pay attention to {\em rotational Weingarten surfaces}. Weingarten surfaces must satisfy a functional relation between their principal curvatures.
In the case of rotational surfaces, the principal curvatures are reached along meridians and parallel of $S_\alpha$ and, from \eqref{eq:prin curv}, it is clear that rotational surfaces constitute a distinguish class of Weingarten surfaces. 
For example, if $k_{\text m}= \mathcal K ' (x)=\kappa (x)$ is invertible, from \eqref{eq:prin curv} we arrive at $k_{\text p}=\mathcal K (\kappa^{-1}(k_{\text m}))/\kappa^{-1}(k_{\text m})$. In general,
we just simply write $\Phi(k_{\text m}, k_{\text p})=0$. But, taking into account \eqref{eq:prin curv}, we easily deduce that the above functional relation translates into a first order differential equation for the geometric linear momentum $\mathcal K=\mathcal K (x)$ determining $S_\alpha$ according Corollary \ref{cor:deter}: 
$$ \hat \Phi (x,\mathcal K  (x),\mathcal K ' (x))=0. $$
If the above equation can be rewritten as $\mathcal K ' (x)=F(x,\mathcal K  (x))$ or, equivalently $k_{\text m}=\hat F (k_{\text p}) $,
it follows that locally any equation of this type with an arbitrary continuous function $F$ (or $\hat F$) admits a solution.

We can illustrate the above simple reasoning analysing some interesting Weingarten-type conditions.

\subsection{Rotational surfaces  with some constant principal curvature}\label{Sect cte}
We distinguish two cases. 
First, if some principal curvature of $S_\alpha$ is null, we have that $k_{\text m} k_{\text p}=0$. If $k_{\text m}=0$, using \eqref{eq:prin curv}, we get that $\mathcal K\equiv c$, $c\in (-1,1)$. Then, using Corollary \ref{cor:deter}, Remark \ref{cylinder} and Proposition \ref{ex:rot surfaces}, we conclude that $S_\alpha $ is a \textit{circular cylinder}, a \textit{plane} (when $c=0$) or a \textit{circular cone} (when $c\neq 0$). And if $k_{\text p}=0$, using \eqref{eq:prin curv}, we get that $\mathcal K\equiv 0$ and $S_\alpha $ is a plane. 

We consider now the case that some principal curvature is a nonzero constant. If $k_{\text m}=a\neq 0$, using \eqref{eq:prin curv}, we get that $\mathcal K(x)=ax+c$, $c\in \R$. Then, using Proposition \ref{ex:rot surfaces}, we conclude that $S_\alpha $ is a \textit{sphere} when $c=0$ or a \textit{torus of revolution} when $c\neq 0$. And if $k_{\text p}=a\neq 0$, using \eqref{eq:prin curv}, we get that $\mathcal K(x)=ax$ and $S_\alpha $ is a sphere or a \textit{circular cylinder} (see Remark \ref{cylinder}).

\subsection{Linear rotational Weingarten surfaces}\label{Sect linear}
They are defined by the linear relation		
$a \, k_{\text m}+b\, k_{\text p}=c, \ a,b,c\in\R, \ a^2+b^2\neq 0. $
We can assume $a\neq 0$ (see Subsection \ref{Sect cte} if $a=0$) and we just write 
\begin{equation}
\label{eq:linW}
k_{\text m}=p \, k_{\text p}+\, q, \, p,q\in\R, \, p\neq 0.
\end{equation}
These surfaces have been recently classified in \cite{LP20a} by means of a qualitative study, providing closed (embedded and not embedded) surfaces
and periodic (embedded and not embedded) surfaces with a geometric behaviour similar to Delaunay surfaces (\cite{D41}). 
In \cite{LP20a} there is a necessary distinction of cases according to $p=1$ or $p\neq 1$.
In fact, when $p\neq 1$, the generatrix curves are $\frac{p}{p-1}$- elastic curves (see \cite{LP21} for a description of them) generalizing classical elastic curves corresponding to $p=2$ (see \cite{LS84} and \cite{MO03}).
Under our optics, \eqref{eq:linW} translates into the linear o.d.e.\ $\mathcal K ' (x)= p \, \mathcal K (x) / x + q$, that it is easy to solve. Using Corollary \ref{cor:deter}, both above mentioned families are uniquely determined, up to $z$-translations, by the following geometric linear momenta:
$$p\neq 1:\quad\mathcal{K}(x)=\frac{q \,x}{1-p}+c \,x^{p}, \, c\in\R,$$
and		
$$p=1:\quad\mathcal{K}(x)=q \,x\ln x+c \,x, \, c\in\R.$$
This can be a reasonable explanation of the commented distinction of cases in \cite{LP20a}.

The case $p=1$, $q=0$ in \eqref{eq:linW}, i.e.\ $k_{\text m}=k_{\text p}$, leads obviously to the only umbilical surfaces (planes and spheres) since then $\mathcal K(x)=c\,x$, $c\in \R$ (see Proposition \ref{ex:rot surfaces}(1),(3)).

We pay now attention to the case $q=0$ in \eqref{eq:linW}, i.e.\ $k_{\text m}=p\, k_{\text p}$, which leads to $\mathcal{K}(x)=c\, x^{p}$. 
Under our optics, we can localize distinguished rotational surfaces satisfying $k_{\text m}=p\, k_{\text p}$ in this subclass of linear Weingarten surfaces. 
Using Proposition \ref{ex:rot surfaces}, we get \textit{catenoids} if $p=-1$, \textit{onducycloids} if $p=1/2$ and \textit{antiparaboloids} if $p=-1/2$. 

\subsection{``Linear'' rotational Weingarten surfaces}\label{Sect special W-linear}
Some authors define linear Weingarten surfaces as those ones such that a linear combination
of its mean curvature $H$ and Gauss curvature $K_{\text G}$ is constant (see e.g.\ \cite{GMM03}):
\begin{equation}\label{eq:W-quasilinear}
a \, K_{\text G} +2b \, H + c=0, \ a^2+b^2 \neq 0, c\in \R.
\end{equation}
They are also referred as special Weingarten surfaces in \cite{L08}.
They include CMC (constant mean curvature) surfaces when $a=0$ and CGC (constant Gauss curvature) surfaces when $b=0$.
The closed ones were studied in \cite{Ch45} and \cite{H51} among others. 
In \cite{RS94}, the authors studied properly embedded surfaces in $\R^3$ satisfying $a \, K_{\text G} +2b \, H=1$, where $a$ and $b$ are positive.
For $a\neq 0$, the curvature diagram corresponding to \eqref{eq:W-quasilinear} is given by the rectangular hyperbola
\begin{equation}\label{eq:rect hyperbola}
\left( k_{\text m} + \frac{b}{a}  \right) \left( k_{\text p} + \frac{b}{a}  \right) = \frac{b^2-ac}{a^2}.
\end{equation}
Using \eqref{eq:prin curv}, \eqref{eq:rect hyperbola} translates into the o.d.e.
$$
(c\,x+b\,\mathcal{K}) dx+(b\,x+a\,\mathcal K) d\mathcal K=0,
$$
that it is exact. Its implicit solution is given by
$$
a \, \mathcal K (x)^2 +2\, b \, x\, \mathcal K (x) + c \, x^2 =d \in \R,
$$
which leads to 
$$
\mathcal K (x)=\frac{1}{a}\left( -b\, x \pm \sqrt{(b^2-ac)x^2+ad} \right)
$$
when $a\neq 0$. If $c=d=0$, we recover the trivial cases corresponding to the plane (when $b=0$) and the sphere of radius $\left| \frac{-a}{2b}\right|$ (when $b\neq 0$).


\subsection{Cubic rotational Weingarten surfaces}\label{Sect cubic}

It is known (see \cite{BG94}) that the ellipsoid of revolution 
\begin{equation}\label{eq:ellipsoid}
\frac{x^2+y^2}{a^2}+\frac{z^2}{b^2}=1
\end{equation}
satisfies the relation
\begin{equation}\label{eq:W cubic +1}
k_{\text m}=\frac{a^4}{b^2}\, k_{\text p}^3.
\end{equation}
In \cite{KS05}, it is also proved that any \textit{closed} surface of revolution satisfying $k_{\text m}=c \, k_{\text p}^3$, for any \textit{positive} constant $c>0$, is congruent to some ellipsoid of revolution. The aim of this section is to generalize this result using our local approach to the study of rotational Weingarten surfaces.

For our purposes, recalling Corollary \ref{cor:deter}, we need to compute the geometric linear momentum of the ellipsoid \eqref{eq:ellipsoid}.
We parametrize the generatrix semiellipse by $x=a \cos t $, $z= b \sin t$, $t \in [-\pi/2,\pi/2]$, and using Remark \ref{no ppa}, it is not difficult to get that
\begin{equation}\label{eq:K ellipsoid}
\mathcal K (x) = \frac{bx}{\sqrt{a^4-(a^2-b^2)x^2}}.
\end{equation} 
Then, using \eqref{eq:prin curv}, we can check \eqref{eq:W cubic +1} easily.

We proceed in the same way with the one-sheet hyperboloid of revolution
\begin{equation}\label{eq:1hyperboloid}
\frac{x^2+y^2}{a^2}-\frac{z^2}{b^2}=1,
\end{equation}
obtaining, from the generatrix hyperbola $x=a \cosh t $, $z= b \sinh t$, $t \in \R$, and Remark \ref{no ppa}, that
\begin{equation}\label{eq:K 1hyperboloid}
\mathcal K (x) = \frac{bx}{\sqrt{(a^2-b^2)x^2-a^4}}
\end{equation} 
and now we can check that the one-sheet hyperboloid of revolution satisfies the relation 
\begin{equation}\label{eq:W cubic -}
k_{\text m}=-\frac{a^4}{b^2} k_{\text p}^3.
\end{equation} 

However, for the two-sheets hyperboloid of revolution
\begin{equation}\label{eq:2hyperboloid}
-\frac{x^2+y^2}{a^2}+\frac{z^2}{b^2}=1,
\end{equation}
as now the generatrix hyperbola is $x=a \sinh t $, $z= \pm b \cosh t$, $t \geq 0$,  we get from Remark \ref{no ppa} that
\begin{equation}\label{eq:K 2hyperboloid}
\mathcal K (x) = \frac{\pm bx}{\sqrt{a^4+(a^2-b^2)x^2}}
\end{equation} 
and so we arrive at the following relation satisfied by the two-sheets hyperboloid of revolution:
\begin{equation}\label{eq:W cubic +2}
k_{\text m}=\frac{a^4}{b^2}\, k_{\text p}^3,
\end{equation}
that it is formally the same than the one \eqref{eq:W cubic +1} of the ellipsoid of revolution.

Finally, for the paraboloid of revolution
\begin{equation}\label{eq:paraboloid}
z=\frac{x^2+y^2}{2a},
\end{equation}
we use the generatrix parabola $x=t$, $z=\frac{t^2}{2a}$, $t\geq 0$, and Remark \ref{no ppa} gives that
\begin{equation}\label{eq:K paraboloid}
\mathcal K (x)= \frac{x}{\sqrt{a^2+x^2}}
\end{equation}
and then the paraboloid of revolution satisfies the relation 
\begin{equation}\label{eq:W cubic}
k_{\text m}=a^2 k_{\text p}^3.
\end{equation}

Now we are in a position to state our main result in this section characterizing all the quadric surfaces of revolution (see Figure \ref{fig:Paraboloids})
in terms of a cubic Weingarten relation.

\begin{figure}[h!]
\begin{center}
\includegraphics[height=4.5cm]{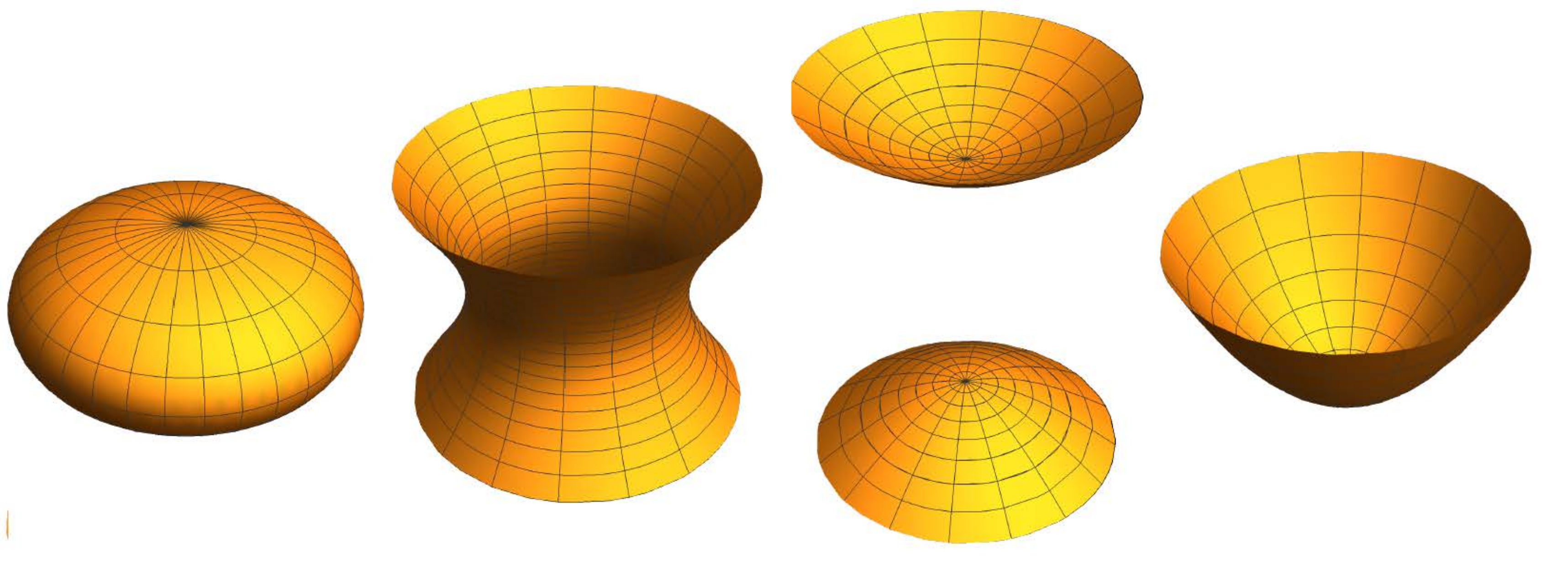}
\caption{Quadric surfaces of revolution, From left to right: Ellipsoid, one-sheet hyperboloid, two-sheets hyperboloid, paraboloid.}
\label{fig:Paraboloids}
\end{center}
\end{figure}

\begin{theorem}\label{th:W-cubic}
The only rotational surfaces satisfying $k_{\text m}=\mu\,  k_{\text p}^3$, $\mu \neq 0$, are the plane and the non-degenerate quadric surfaces of revolution.
\end{theorem}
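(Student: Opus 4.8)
The plan is to use the reduction \eqref{eq:prin curv} to convert the cubic Weingarten condition into a first-order ODE for the geometric linear momentum and integrate it explicitly. Substituting $k_{\text m}=\mathcal K'(x)$ and $k_{\text p}=\mathcal K(x)/x$ into $k_{\text m}=\mu\, k_{\text p}^3$ produces the separable equation
\[
\mathcal K'(x)=\mu\,\frac{\mathcal K(x)^3}{x^3}.
\]
First I would isolate the singular solution $\mathcal K\equiv 0$, which trivially satisfies the relation (both sides vanish) and which by Proposition \ref{ex:rot surfaces}(1) corresponds to the plane. Away from it, separating variables and integrating gives $\mathcal K(x)^{-2}=\mu\,x^{-2}+D$ for an integration constant $D\in\R$, that is,
\[
\mathcal K(x)^2=\frac{x^2}{\mu+D\,x^2}.
\]
By Corollary \ref{cor:deter} each admissible value of $D$ determines the surface up to a $z$-translation, so the task reduces to identifying the generatrix attached to this one-parameter family of momenta.

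To recognize the generatrix I would substitute $\mathcal K$ into the graph formula \eqref{eq:zgraph}. A short computation gives $1-\mathcal K^2=(\mu+(D-1)x^2)/(\mu+D x^2)$, so that
\[
z=\pm\int\frac{x\,dx}{\sqrt{\mu+(D-1)x^2}}.
\]
Writing $E:=D-1$ and performing this elementary integral, the generatrix satisfies $(z-z_0)^2=(\mu+E x^2)/E^2$ when $E\neq 0$, and $z-z_0=\pm x^2/(2\sqrt{\mu})$ when $E=0$; in every instance this is a conic in the $xz$-plane, so $S_\alpha$ is a quadric of revolution.

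The classification then amounts to sorting the signs of $E$ and $\mu$. The case $E=0$ (which forces $\mu>0$) gives a parabola, hence the paraboloid \eqref{eq:paraboloid}; the case $E<0$ (again forcing $\mu>0$) gives an ellipse, hence the ellipsoid \eqref{eq:ellipsoid}, with the circular/spherical subcase occurring precisely at $D=0$; and the case $E>0$ gives a hyperbola, of two-sheets type when $\mu>0$ and of one-sheet type when $\mu<0$, producing the hyperboloids \eqref{eq:2hyperboloid} and \eqref{eq:1hyperboloid}. Matching against the momenta \eqref{eq:K ellipsoid}--\eqref{eq:K paraboloid} already computed expresses $a$ and $b$ in terms of $\mu$ and $D$ and, read backwards, shows that each such quadric indeed satisfies the cubic relation. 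The step I expect to demand the most care is this final case analysis: one must track the admissibility constraints $\mu+Dx^2>0$ and $\mathcal K^2\le 1$ that fix the range of $x$, verify that no solution of the ODE is lost, and correctly attribute the threshold values $D=0$ (sphere) and $E=0$ (paraboloid), so that the plane together with exactly the four non-degenerate quadrics of revolution exhaust all possibilities.
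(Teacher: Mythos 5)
Your proposal is correct, and its core coincides with the paper's proof: both translate $k_{\text m}=\mu\, k_{\text p}^3$ via \eqref{eq:prin curv} into the separable equation $\mathcal K'(x)=\mu\,\mathcal K(x)^3/x^3$, peel off the constant solution $\mathcal K\equiv 0$ (the plane, by Proposition \ref{ex:rot surfaces}(1)), and integrate to obtain exactly the one-parameter family $\mathcal K(x)^2=x^2/(\mu+c\,x^2)$ of \eqref{eq:K cubic} (your $D$ is the paper's $c$). Where you genuinely diverge is in the identification step. The paper never leaves the level of the momentum: it matches $\mathcal K(x)=\pm x/\sqrt{\mu+c\,x^2}$ against the momenta \eqref{eq:K ellipsoid}, \eqref{eq:K 1hyperboloid}, \eqref{eq:K 2hyperboloid} and \eqref{eq:K paraboloid} precomputed at the start of Subsection \ref{Sect cubic} and concludes by the uniqueness statement of Corollary \ref{cor:deter}. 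You instead perform one extra quadrature through \eqref{eq:zgraph}, arrive at $(z-z_0)^2=(\mu+Ex^2)/E^2$ with $E=c-1$ (respectively $z-z_0=\pm x^2/(2\sqrt{\mu})$ when $E=0$), and read off the conic directly. The trade-off: the paper's matching is shorter given its preparatory computations and requires no further integration, while your route is self-contained --- it \emph{discovers} that the generatrices are conics rather than verifying them against a known list --- at the cost of the extra integral and of tracking the admissibility constraints $\mu+c\,x^2>0$ and $\mathcal K^2<1$ yourself; your sign analysis in $\mu$ and $E$ does reproduce the paper's case split ($c<1$, $c=1$, $c>1$ for $\mu>0$, and the forced $c>1$ for $\mu<0$), so both arguments reach the same classification.
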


\begin{proof}
Using \eqref{eq:prin curv}, the cubic Weingarten relation $k_{\text m}=\mu\,  k_{\text p}^3$ translates into the separable o.d.e.
$$\mathcal K ' (x)=\mu\, \mathcal K(x)^3/x^3. $$
Its constant solution $\mathcal K \equiv 0$ leads to the plane (see Proposition \ref{ex:rot surfaces}(1)).
Its non constant solution is given by
\begin{equation}\label{eq:K cubic}
\mathcal{K}(x)=\pm  \frac{x}{\sqrt{\mu + c\,x^2}}  , \, c\in\R.
\end{equation}
We are going to identify the rotational surfaces uniquely determined, up to $z$-translations, by the one parameter family of geometric linear momenta (depending on $c$) given in \eqref{eq:K cubic} (see Corollary \ref{cor:deter}). There is no restriction if we only consider plus sign in \eqref{eq:K cubic}.

We distinguish two cases according to the sign of $\mu$.
\begin{itemize}
\item $\mu >0$. We separate in turn three possibilities:
\begin{itemize}
\item[(i)] $c<1$: Then $a^2=\frac{\mu}{1-c}$ is well defined, and putting $b^2=\frac{a^4}{\mu}$, we get that \eqref{eq:K cubic} is exactly \eqref{eq:K ellipsoid} and Corollary \ref{cor:deter} gives that we arrive at the ellipsoid of revolution \eqref{eq:ellipsoid}. In particular, if $c=0$, we obtain the sphere of radius $\sqrt \mu$.
\item[(ii)] $c>1$: We now define $a^2=\frac{\mu}{c-1}$ and $b^2=\frac{a^4}{\mu}$. Then we obtain that \eqref{eq:K cubic} is exactly \eqref{eq:K 2hyperboloid} and Corollary \ref{cor:deter} concludes that we arrive at the two-sheets hyperboloid of revolution \eqref{eq:2hyperboloid}.
\item[(iii)] $c=1$: We define $a^2=\mu$ and we have that \eqref{eq:K cubic} is exactly \eqref{eq:K paraboloid}. We deduce from Corollary \ref{cor:deter} that we arrive at the paraboloid of revolution \eqref{eq:paraboloid}.
\end{itemize}
\item $\mu <0$. Since $\mu + c\,x^2>0 $ from \eqref{eq:K cubic} and taking into account that always $\mathcal K(x)^2 <1$, we deduce that $(1-c)x^2<\mu<0$ and so $c>1$. Then $a^2=\frac{\mu}{1-c}$ is well defined, and putting $b^2=-\frac{a^4}{\mu}$, we get that \eqref{eq:K cubic} is exactly \eqref{eq:K 1hyperboloid} and Corollary \ref{cor:deter} gives that we arrive at the one-sheet hyperboloid of revolution \eqref{eq:1hyperboloid}. 
\end{itemize}
This proves the result.
\end{proof}

\begin{remark}\label{german}
We point out that, although it is not explicitly established in \cite{KS05} because the authors were interested in closed Weingarten surfaces, the case $\mu>0$ in Theorem \ref{th:W-cubic} was considered in \cite{KS05}. 
\end{remark}

\subsection{Rotational Weingarten surfaces generated by elastic curves}\label{Sect elastic}
The elastic curves are those plane curves whose curvature is, at all points, proportional to the distance to a fixed line, called the directrix.
They were studied by Jacques Bernoulli in 1691 who named it elastica, by Euler in 1744, and Poisson in 1833 (cf.\ \cite{F93}).
With the elastica in the $xz$-plane and the directrix as the $z$ axis, the above condition can be written as $\kappa (x) = 2a x$, $a>0$, and then we have a one-parameter family of elastic curves determined, up to $z$-translations, by the geometric linear momenta
\begin{equation}\label{eq:K elasticoids}
\mathcal K (x)= ax^2 - k, \ k>-1,
\end{equation}
since $\mathcal K (x)^2<1$ (see Remark \ref{c}). We define the \textit{elasticoids} as the rotational surfaces generated by the rotation of an elastic curve around its directrix. 
Using Corollary \ref{cor:deter}, they are uniquely determined, up to translations along $z$-axis, by the geometric linear momenta given in \eqref{eq:K elasticoids}. Following \cite{F93} or \cite{S08} for example, we can distinguish seven types of elasticoids according to the seven types of elastic curves depending on the possible ranges of values of the \textit{modulus} $k\in (-1,+\infty) $ of the elliptic functions which appear in the parametrizations of the elastic curves generating the elasticoids.
See Figures \ref{fig:Elasticoids I}, \ref{fig:Elasticoids II}, \ref{fig:Elasticoids III} and \ref{fig:Elasticoids IV} for a description of them.

\begin{figure}[h!]
\begin{center}
\includegraphics[height=4.5cm]{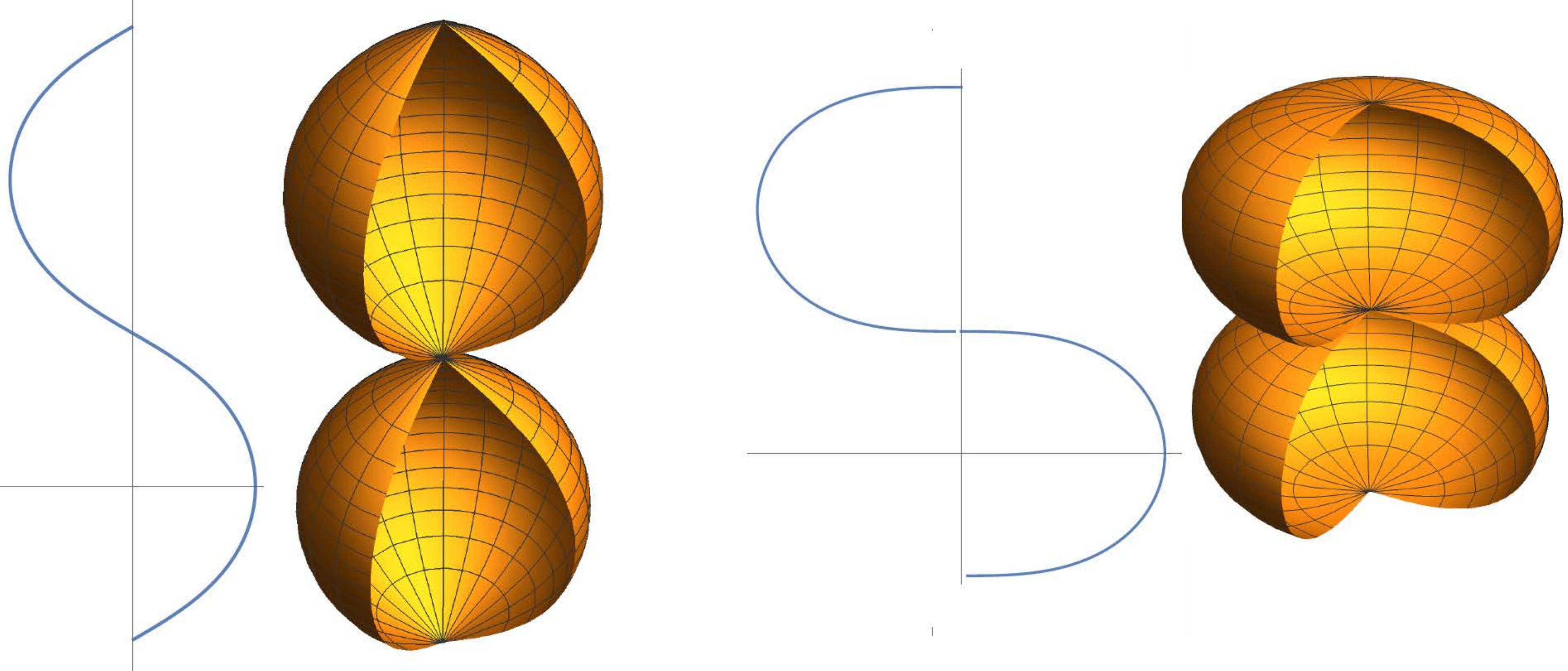}
\caption{Elasticoids I: surfaces of revolution generated by, from left to right, the pseudo-sinusoids ($-1<k<0$) and the right \textit{lintearia} ($k=0$).}
\label{fig:Elasticoids I}
\end{center}
\end{figure}

\begin{figure}[h!]
\begin{center}
\includegraphics[height=4.5cm]{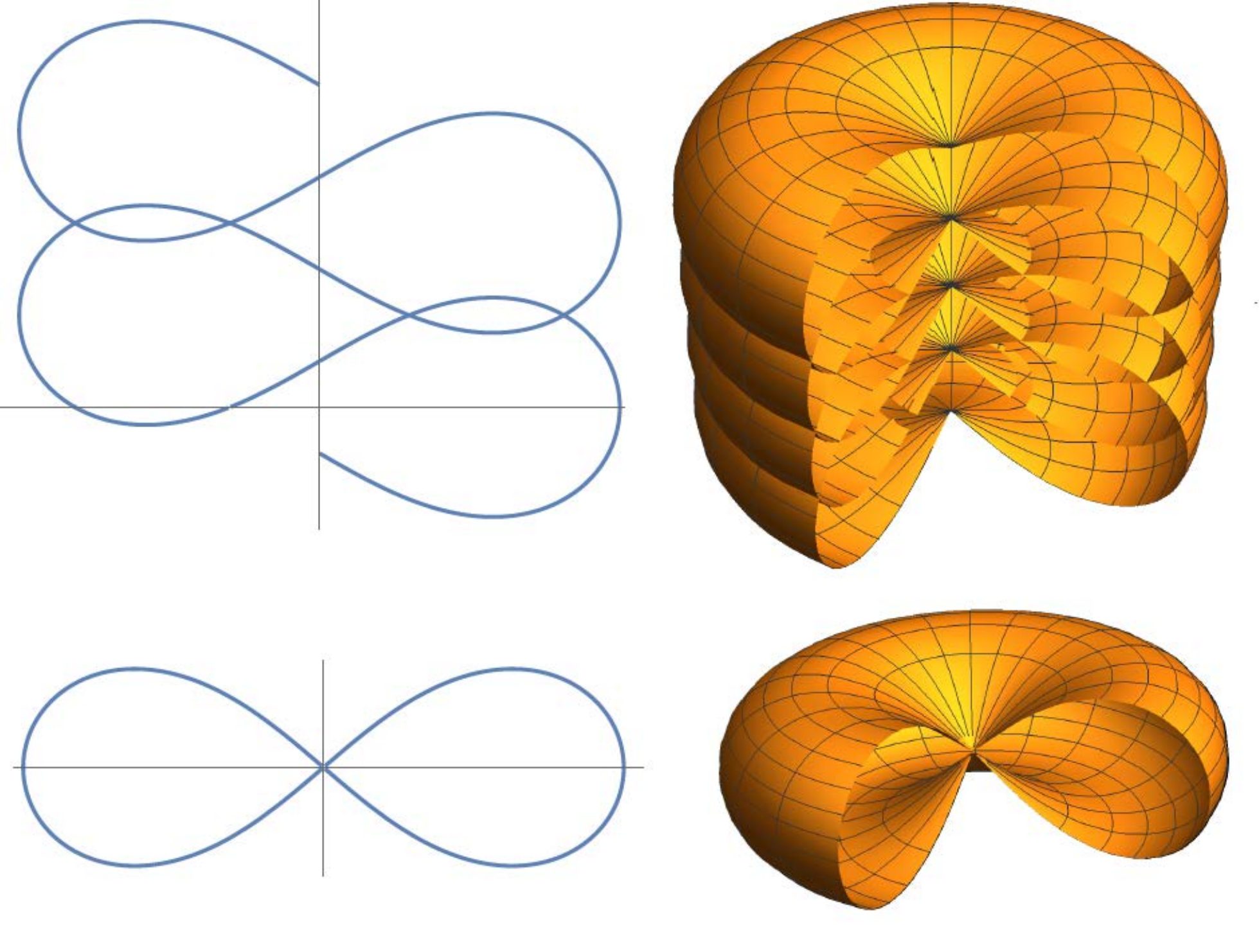}
\caption{Elasticoids II: surfaces of revolution generated by, from left to right, the elastic curves with $0<k<k_1$ and the pseudolemniscate ($k=k_1:=0.65222\dots$).}
\label{fig:Elasticoids II}
\end{center}
\end{figure}

\begin{figure}[h!]
\begin{center}
\includegraphics[height=4.5cm]{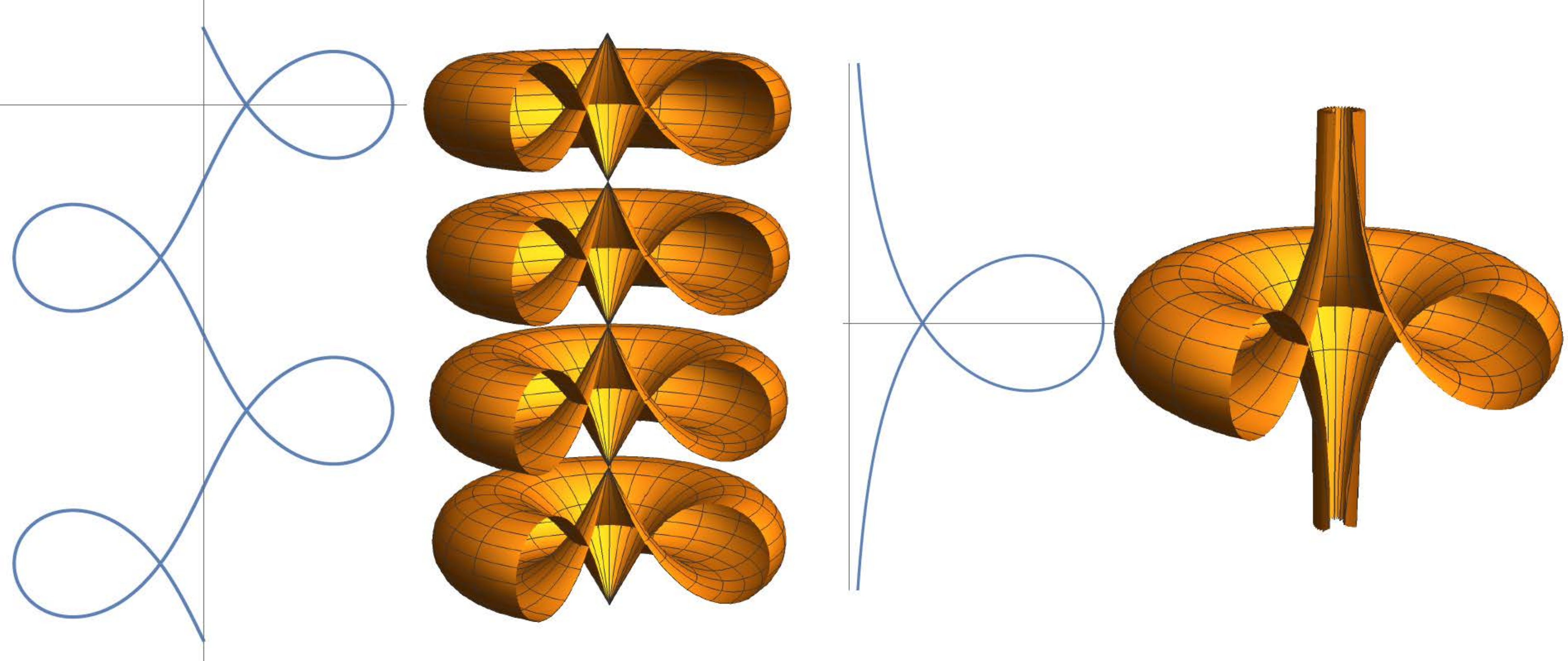}
\caption{Elasticoids III: surfaces of revolution generated by, from left to right, the elastic curves with $k_1<k<1$ and the convict curve ($k=1$).}
\label{fig:Elasticoids III}
\end{center}
\end{figure}

\begin{figure}[h!]
\begin{center}
\includegraphics[height=4.5cm]{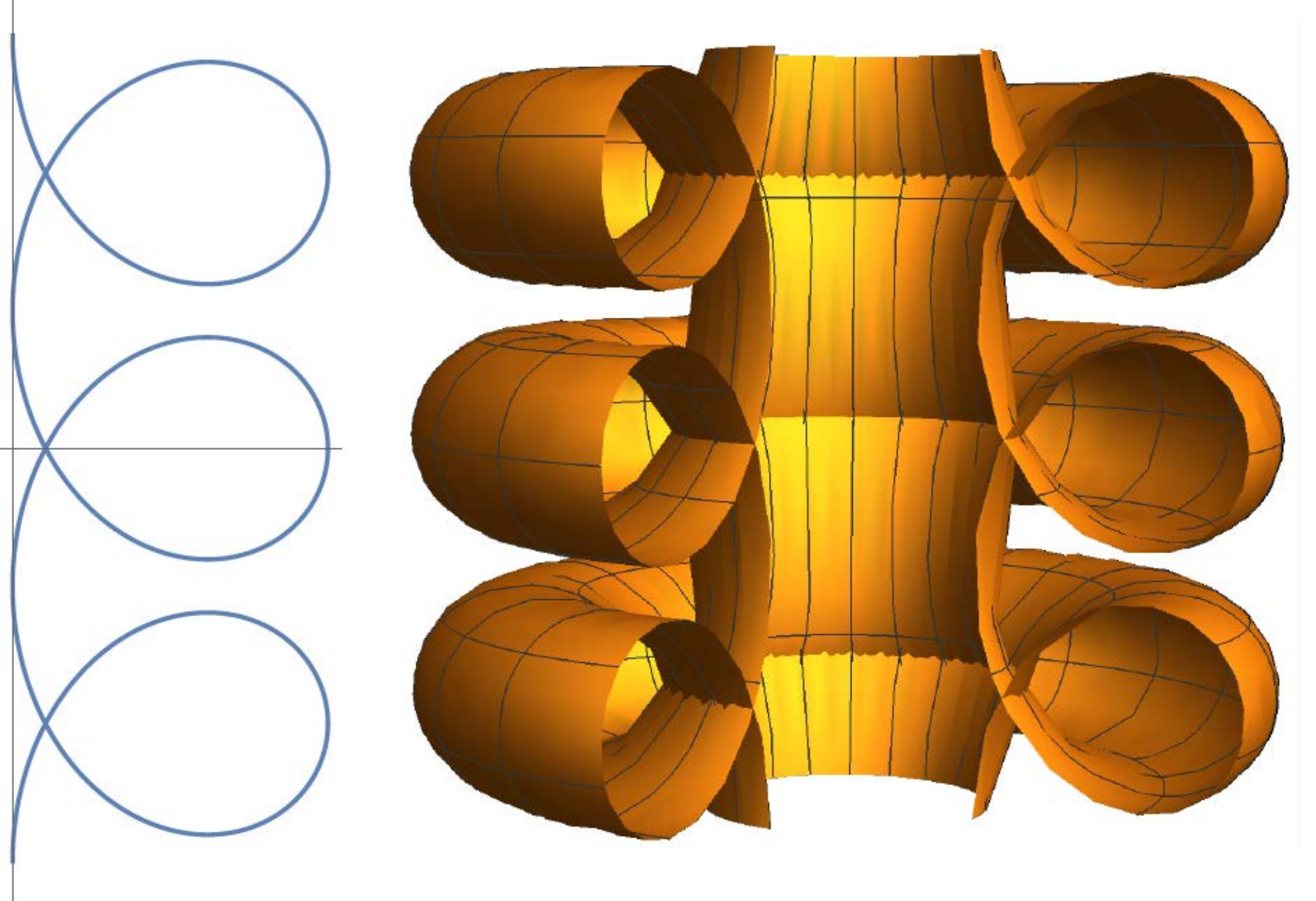}
\caption{Elasticoids IV: surfaces of revolution generated by the pseudotrochoids ($k>1$).}
\label{fig:Elasticoids IV}
\end{center}
\end{figure}

Using \eqref{eq:prin curv} and \eqref{eq:K elasticoids}, we have that the principal curvatures for the elasticoids are given by
$k_{\text m}= 2 a x$ and $k_{\text p}= ax -k/x$. We can eliminate $x$ since $a\neq 0$ obtaining the W-diagram of the elasticoids:
\begin{equation}\label{eq:W_elasticoids}
k_{\text m}^2-2 k_{\text p}\, k_{\text m} = 4ak.
\end{equation}
If $k\neq 0$, they are hyperbolae with asymptotes $k_{\text m}=0$ and $k_{\text m}=2 k_{\text p}$ (see Figure \ref{fig:W-diagram}), which are obtained in \eqref{eq:W_elasticoids} when $k=0$.
\begin{figure}[h!]
\begin{center}
\includegraphics[height=3.5cm]{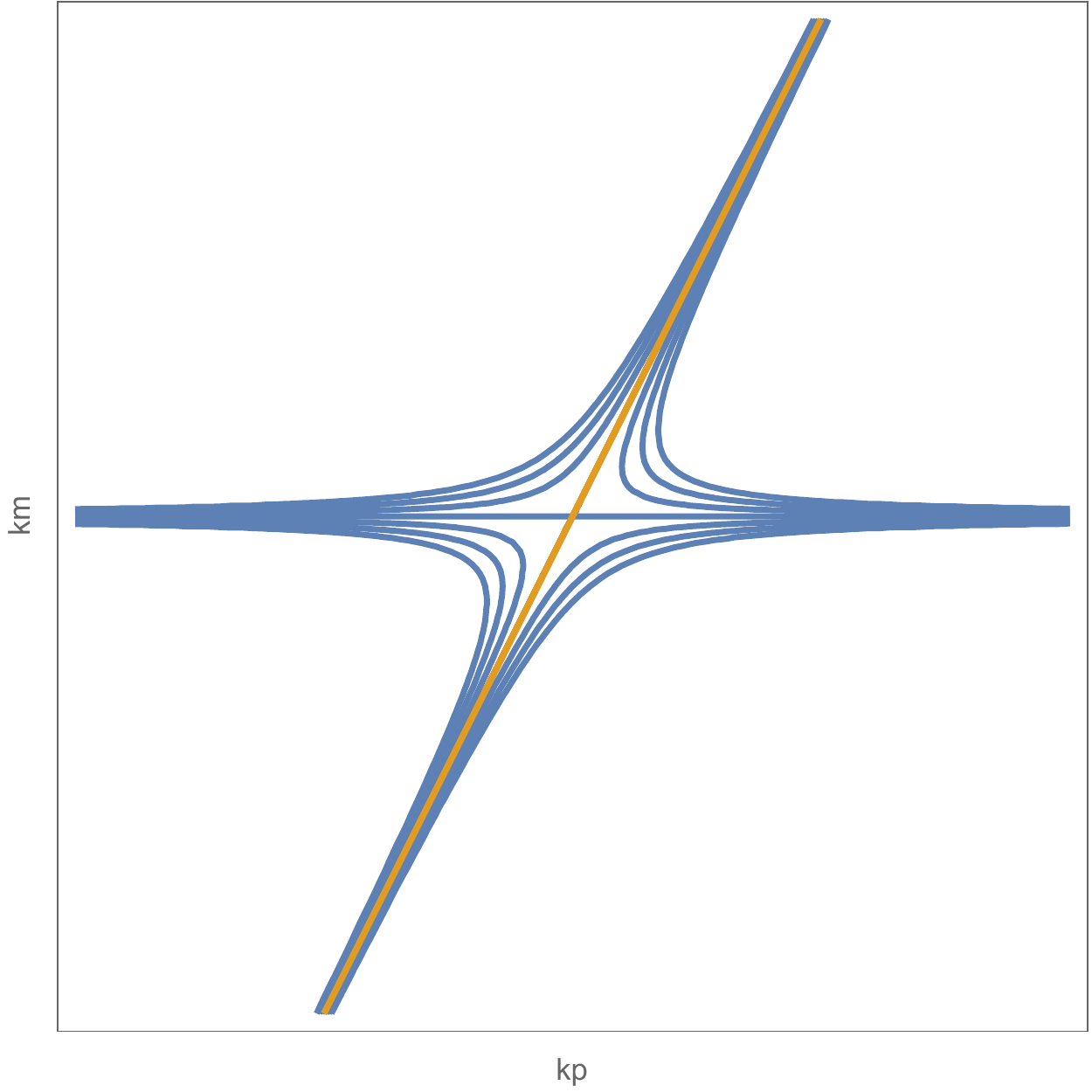}
\caption{W-diagram of the elasticoids.}
\label{fig:W-diagram}
\end{center}
\end{figure}

The elasticoids with null modulus, i.e.\ $k=0$, are linear Weingarten surfaces since they satisfy $k_{\text m}=2 k_{\text p}$ (see Subsection \ref{Sect linear}). Otherwise, we provide the following uniqueness result for Weingarten surfaces involving hyperbolae in the curvature diagram.

\begin{theorem}\label{th:W-hyperbola}
The only rotational surfaces satisfying $k_{\text m}^2-2 k_{\text p}\, k_{\text m} + \mu=0$, $\mu \neq 0$, are the sphere of radius $R=1/\mu$ ($\mu >0$) and the elasticoids with nonzero modulus.
\end{theorem}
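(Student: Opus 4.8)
The plan is to follow the strategy already applied in Theorem~\ref{th:W-cubic}: translate the Weingarten condition into a first order o.d.e.\ for the geometric linear momentum via \eqref{eq:prin curv}, solve it, and identify the solutions through Corollary~\ref{cor:deter}. Substituting $k_{\text m}=\mathcal K'(x)$ and $k_{\text p}=\mathcal K(x)/x$, the relation $k_{\text m}^2-2k_{\text p}k_{\text m}+\mu=0$ becomes
\begin{equation*}
\mathcal K'(x)^2-\frac{2\,\mathcal K(x)}{x}\,\mathcal K'(x)+\mu=0,
\end{equation*}
which is quadratic, rather than linear, in $\mathcal K'$. The observation that makes it tractable is to rewrite it in terms of $k_{\text p}$ alone. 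Completing the square gives $(k_{\text m}-k_{\text p})^2=k_{\text p}^2-\mu$, and since $k_{\text m}-k_{\text p}=\mathcal K'-\mathcal K/x=x\,(\mathcal K/x)'=x\,k_{\text p}'$, setting $u:=k_{\text p}=\mathcal K(x)/x$ collapses the condition to the \emph{separable} first order equation
\begin{equation*}
x^2\,u'(x)^2=u(x)^2-\mu.
\end{equation*}

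First I would isolate the equilibrium (constant) solution: $u'\equiv 0$ forces $u^2=\mu$, which is possible only when $\mu>0$, giving $k_{\text p}\equiv\sqrt{\mu}$ and hence $\mathcal K(x)=\sqrt{\mu}\,x$. By Proposition~\ref{ex:rot surfaces}(3) and Corollary~\ref{cor:deter} this is the sphere (of radius $1/\sqrt{\mu}$). For the non-constant solutions I would separate variables, $du/\sqrt{u^2-\mu}=\pm\,dx/x$, and integrate; the primitive is $\arccosh(u/\sqrt{\mu})$ when $\mu>0$ and $\arcsinh(u/\sqrt{-\mu})$ when $\mu<0$, so in either case exponentiating returns a solution of the form $u(x)=A\,x+B/x$. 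Rather than track the two regimes separately, a cleaner route is to substitute $u=Ax+B/x$ directly into the separable equation: one finds $u^2-x^2u'^2=4AB$, so $u=Ax+B/x$ solves it precisely when $4AB=\mu$, covering both signs of $\mu$ at once.

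Finally I would identify these solutions. From $u=\mathcal K/x$ we recover $\mathcal K(x)=Ax^2+B$, which is exactly the elasticoid momentum \eqref{eq:K elasticoids} with $a=A$ and $k=-B$. The constraint $4AB=\mu\neq 0$ forces $A\neq 0$ and $B\neq 0$, i.e.\ $a\neq 0$ and nonzero modulus $k\neq 0$; moreover $\mu=-4ak$ recovers precisely the W-diagram \eqref{eq:W_elasticoids}. By Corollary~\ref{cor:deter} the non-constant solutions are therefore the elasticoids with nonzero modulus, which together with the sphere completes the classification.

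The routine part is the two quadratures; the step requiring genuine care is the reduction itself, namely spotting that the substitution $u=k_{\text p}$ turns the equation---quadratic in $\mathcal K'$ and therefore seemingly awkward---into a separable one, and then handling the constant solution (which produces the sphere and exists only for $\mu>0$) separately from the two-parameter family $u=Ax+B/x$. I should also verify along the way that the admissibility condition $\mathcal K(x)^2<1$ is compatible with $\mathcal K(x)=Ax^2+B$, exactly as in the discussion preceding \eqref{eq:K elasticoids}, so that each solution genuinely corresponds to a regular generatrix.
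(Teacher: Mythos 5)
Your proposal is correct and follows essentially the same route as the paper: the substitution $u=k_{\text p}=\mathcal K(x)/x$ is exactly the paper's change of variable $\lambda(x)=\mathcal K(x)/x$, yielding the same separable equation $x^2\lambda'^2=\lambda^2-\mu$, with the constant solution giving the sphere and the non-constant solutions giving $\mathcal K(x)=ax^2+\tfrac{\mu}{4a}$, i.e.\ the elasticoids of modulus $k=-\tfrac{\mu}{4a}$. As a minor bonus, your radius $1/\sqrt{\mu}$ for the sphere is the correct one (since $\mathcal K(x)=\sqrt{\mu}\,x$ corresponds to $\mathcal K(x)=x/R$), whereas the paper's ``$R=1/\mu$'' appears to be a typo.
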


\begin{proof}
Using \eqref{eq:prin curv}, the Weingarten relation $k_{\text m}^2-2 k_{\text p}\, k_{\text m} + \mu=0$ translates into the o.d.e.
\begin{equation}\label{eq:ode K elastic}
\mathcal K ' (x)^2 -2 \mathcal K(x) \mathcal K ' (x) /x +\mu =0.
\end{equation}
We make the change of variable $\lambda (x)=\mathcal K  (x)/x$ and \eqref{eq:ode K elastic} becomes into the separable o.d.e. for $\lambda$:
\begin{equation}\label{eq:ode kp elastic}
 \lambda' (x)=\frac{\sqrt{\lambda^2(x)-\mu}}{x}.
\end{equation}
The constant solution $\lambda^2\equiv\mu>0$ leads to $\mathcal K(x)=\sqrt \mu\, x$ and Proposition \ref{ex:rot surfaces}(3) gives the sphere of radius $R=1/\mu$. Otherwise, by integrating \eqref{eq:ode kp elastic}, we get $\lambda (x)+\sqrt{\lambda^2(x)-\mu}=2ax$, for some constant $a>0$. From here we easily deduce that $\mathcal K(x)=ax^2+\frac{\mu}{4a}$. Looking at \eqref{eq:K elasticoids}, using Corollary \ref{cor:deter}, we arrive at the elasticoid of modulus
$k=-\frac{\mu}{4a}$, which is consistent with \eqref{eq:W_elasticoids}. This finishes the proof. 
\end{proof}


\section{Prescribing curvature on a rotational surface}\label{Sect4}
Making use of Corollary \ref{cor:deter}, we present in this section an existence and uniqueness result on prescribing mean or Gauss curvature for a rotational surface with an arbitrary continuous function depending on the distance from the surface to the axis of revolution.

\begin{theorem}\label{th:Prescribe H KGauss}
\noindent
\begin{enumerate}
\item[(a)] Let $H=H(x)$, $x>0$, be a continuous function. 
Then there exists a one-parameter family of rotational surfaces with mean curvature $H(x)$, 
$x$ being the distance from the surface to the axis of revolution.
The surfaces in the family are uniquely determined, up to translations along $z$-axis, by the geometric linear momenta of their generatrix curves given by
\begin{equation}\label{eq:mom-H}
x \, \mathcal K (x) =2 \! \int \! x  H(x) dx.
\end{equation}
\item[(b)] Let $K_{\text{G}}=K_{\text{G}}(x)$, $x>0$, be a continuous function. 
Then there exists a one-parameter family of rotational surfaces with Gauss curvature  $K_{\text{G}}(z)$,
$x$ being the distance from the surface to the axis of revolution. The surfaces in the family are uniquely determined, up to translations along $z$-axis, by the geometric linear momenta of their generatrix curves given by
\begin{equation}\label{eq:mom-KGauss}
 \mathcal K(x)^2 =2 \! \int \! x  K_{\text G}(x) dx.
\end{equation}
\end{enumerate}
\end{theorem}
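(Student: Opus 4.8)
The plan is to treat both parts uniformly by reading off the defining ODE for the geometric linear momentum $\mathcal K$ directly from the curvature expressions \eqref{eq:H} and \eqref{eq:KGauss}, and then invoking Corollary \ref{cor:deter} to convert a solution $\mathcal K=\mathcal K(x)$ into an honest rotational surface. For part (a), I would start from $2H=\mathcal K'(x)+\mathcal K(x)/x$ and rewrite the right-hand side as a total derivative. The key observation is that $x\mathcal K'(x)+\mathcal K(x)=(x\mathcal K(x))'$, so multiplying $2H(x)=\mathcal K'(x)+\mathcal K(x)/x$ through by $x$ gives $2xH(x)=(x\mathcal K(x))'$; integrating once yields \eqref{eq:mom-H}. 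For part (b), from $K_{\text G}=\mathcal K(x)\mathcal K'(x)/x$ I would multiply by $x$ to obtain $xK_{\text G}(x)=\mathcal K(x)\mathcal K'(x)=\tfrac12\bigl(\mathcal K(x)^2\bigr)'$, and a single integration gives \eqref{eq:mom-KGauss}.

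The heart of the argument is therefore recognizing both first-order relations as exact, so that a single quadrature of the prescribed continuous function $H$ or $K_{\text G}$ produces $\mathcal K$ implicitly. Since $H$ (respectively $K_{\text G}$) is only assumed continuous, the antiderivative $\int xH(x)\,dx$ (respectively $\int xK_{\text G}(x)\,dx$) exists and is $C^1$; the constant of integration is precisely the free parameter, so we genuinely obtain a one-parameter family in each case. I would then solve explicitly: $\mathcal K(x)=\tfrac{2}{x}\int xH(x)\,dx$ in part (a), and $\mathcal K(x)=\pm\bigl(2\int xK_{\text G}(x)\,dx\bigr)^{1/2}$ in part (b).

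The main obstacle is not the algebra but the admissibility condition: Corollary \ref{cor:deter} (via Corollary \ref{cor;key} and Remark \ref{c}) requires $\mathcal K(x)^2<1$ on the relevant domain, since $\mathcal K=\sin\theta$ must take values in $(-1,1)$ for the momentum to come from a genuine arc-length-parametrized generatrix. I would therefore restrict the parameter and the $x$-range so that the formulas above satisfy this bound, and only then apply Corollary \ref{cor:deter} to recover the generatrix curve $\alpha=(x,z)$ uniquely up to $z$-translations, and with it the surface $S_\alpha$. Once $\mathcal K(x)$ is known and admissible, the reconstruction of $x(s)$ and $z(s)$ is exactly the three-quadrature algorithm of Remark \ref{c}, and \eqref{eq:prin curv} guarantees that the resulting surface has the prescribed mean (respectively Gauss) curvature by construction. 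Uniqueness up to $z$-translation is immediate from Corollary \ref{cor:deter}, since distinct surfaces in the family correspond exactly to distinct choices of the integration constant.
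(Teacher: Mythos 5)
Your proposal is correct and follows essentially the same route as the paper: rewrite \eqref{eq:H} and \eqref{eq:KGauss} as the exact first-order equations $(x\mathcal K)'=2xH(x)$ and $\tfrac12(\mathcal K^2)'=xK_{\text G}(x)$, integrate once (the integration constant giving the parameter of the family), and invoke Corollary \ref{cor:deter} for existence and uniqueness up to $z$-translations. Your explicit attention to the admissibility condition $\mathcal K(x)^2<1$ is a small but welcome addition that the paper leaves implicit.
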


\begin{remark}\label{cte int}
The parameter in both uniparametric families described in Theorem \ref{th:Prescribe H KGauss} comes from the integration constant in \eqref{eq:mom-H} and \eqref{eq:mom-KGauss}.
\end{remark}

\begin{proof}
The proof of part (a) simply relies on solving the linear equation
$$ \mathcal K ' (x)+\frac{\mathcal K (x)}{x}=2H(x) $$
for unknown $\mathcal K $, coming from \eqref{eq:H}, and applying Corollary \ref{cor:deter}.

The same reasoning gives part (b) solving now the immediate equation 
$$\mathcal K ' (x) \mathcal K (x)=x K_{\text G}(x)$$
coming from \eqref{eq:KGauss}.
\end{proof}

As a first application of part (a) in Theorem \ref{th:Prescribe H KGauss}, we provide a very short simple proof of a classical result of Euler (cf.\ \cite{E44}) concerning rotational minimal surfaces, i.e.\ those with vanishing mean curvature.

\begin{corollary}\label{cor:Euler Th}
The only minimal rotational surfaces are the plane and the catenoid.
\end{corollary}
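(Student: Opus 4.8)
The plan is to apply part (a) of Theorem~\ref{th:Prescribe H KGauss} with the prescribed mean curvature being identically zero, and then to identify the resulting one-parameter family of rotational surfaces. Since a minimal rotational surface is precisely one satisfying $H(x)\equiv 0$, formula \eqref{eq:mom-H} immediately gives $x\,\mathcal K(x)=2\int x\cdot 0\,dx=c$ for some integration constant $c\in\R$, so that the geometric linear momenta of the generatrices are
\begin{equation*}
\mathcal K(x)=\frac{c}{x},\quad c\in\R.
\end{equation*}
By Corollary~\ref{cor:deter}, each such momentum determines a rotational surface uniquely up to $z$-translations, so the whole problem reduces to recognizing which surfaces correspond to these momenta.

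First I would dispose of the degenerate value $c=0$: then $\mathcal K\equiv 0$, which by Proposition~\ref{ex:rot surfaces}(1) determines the (horizontal) plane. For $c\neq 0$, I would observe that the condition $\mathcal K(x)^2<1$ required for a genuine curve (see Remark~\ref{c}) forces $x>|c|$, so the generatrix lives in the region away from the axis, as one expects for a catenary. Writing $a=|c|>0$, the momentum $\mathcal K(x)=a/x$ matches exactly the momentum of the catenary recorded in Example~\ref{ex:curves}(4), namely $\mathcal K(x)=a/x$ for the catenary $x=a\cosh(z/a)$. Hence, again by Corollary~\ref{cor:deter} (or directly by Proposition~\ref{ex:rot surfaces}(5)), the surface determined by $\mathcal K(x)=c/x$ is the catenoid of chord $a=|c|$.

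Putting these together shows that the only minimal rotational surfaces are the plane (from $c=0$) and the catenoids (from $c\neq 0$), which is the assertion. I do not expect any serious obstacle here: the entire content is already packaged in Theorem~\ref{th:Prescribe H KGauss}, and the only genuine step is the curve-recognition, i.e.\ matching $\mathcal K(x)=c/x$ to a catenary via the momentum list in Example~\ref{ex:curves}. The mildest point of care is the sign/range discussion ensuring $x>|c|$ so that $\mathcal K^2<1$, together with noting that the sign ambiguity $\pm$ and the choice $a=|c|$ do not produce new surfaces, only reflections and reparametrizations of the same catenoid. As a consistency check one can verify directly from \eqref{eq:H} that $\mathcal K(x)=c/x$ gives $2H=\mathcal K'(x)+\mathcal K(x)/x=-c/x^2+c/x^2=0$, confirming minimality.
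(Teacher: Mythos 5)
Your proposal is correct and follows exactly the paper's own argument: apply Theorem~\ref{th:Prescribe H KGauss}(a) with $H\equiv 0$ to get $\mathcal K(x)=c/x$, then identify $c=0$ with the plane via Proposition~\ref{ex:rot surfaces}(1) and $c\neq 0$ with the catenoid via Proposition~\ref{ex:rot surfaces}(5). The extra remarks on the range $x>|c|$ and the direct verification of $H=0$ are harmless additions not present in (and not needed by) the paper's proof.
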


\begin{proof}
If $H=0$, then \eqref{eq:mom-H} leads to $ \mathcal K(x)=c/x$, $c\in \R$.
When $c=0$, Proposition \ref{ex:rot surfaces}(1) gives the plane, and when $c\neq 0$, 
Proposition \ref{ex:rot surfaces}(5) leads to the catenoid. 
\end{proof}

Applying again part(a) in Theorem \ref{th:Prescribe H KGauss}, we now deal with a new shorter proof of the classification of rotational CMC surfaces given by Delaunay (cf.\ \cite{D41}, see also \cite{PM19}). We recall (see \cite{F93}) that the {\em Delaunay surfaces} are the surfaces of revolution generated by the rotation of the {\em Delaunay roulettes} around their base. According to \cite{F93}, the differential equation of these curves in the $xz$-plane is given by 
\begin{equation}\label{eq: roulettes}
\left( \frac{dx}{dz} \right)^2 = \frac{4a^2 x^2-(x^2+\epsilon b^2)^2}{(x^2+\epsilon b^2)^2},
\end{equation}
with $\epsilon = 1$ for the elliptic roulette (ellipse with semi-axes $a$ and $b$, $a > b$), 
and $\epsilon = -1$ for the hyperbolic roulette (hyperbola with semi-axes $a$ and $b$).
The surfaces associated to an elliptic roulette are called {\em onduloids} and those ones associated to a hyperbolic roulette {\em nodoids}.
Since the parabolic roulette is the catenary, the associated surface is none other than the catenoid (case of zero mean curvature).
The Delaunay surfaces have the remarkable property, apart from the circular cylinder (corresponding to a circular roulette), of being the only surfaces of revolution with nonzero constant mean curvature (if we include the sphere, which is a limit case). See Figure \ref{fig:Roulettes}.

\begin{figure}[h!]
\begin{center}
\includegraphics[height=2.5cm]{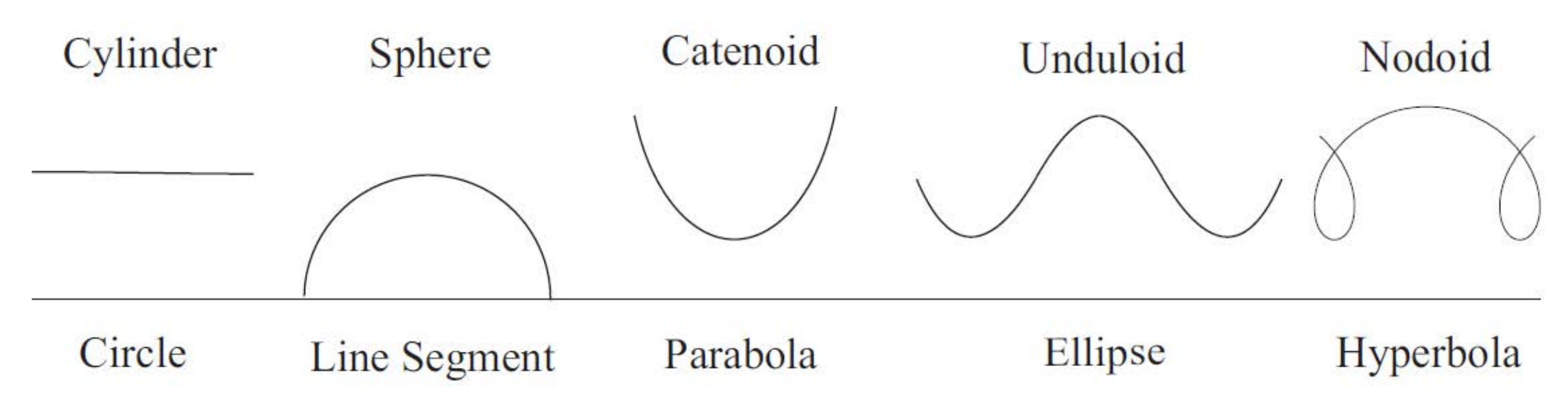}
\caption{Delaunay roulettes (the profile curves of the Delaunay surfaces)
generated by rolling the conics listed below them; this is Figure 1 in \cite{PM19}.}
\label{fig:Roulettes}
\end{center}
\end{figure}

\begin{corollary}\label{cor:Delaunay Th}
The only nonzero constant mean curvature rotational surfaces are the circular cylinder, the sphere and the Delaunay surfaces.
\end{corollary}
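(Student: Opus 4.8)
The plan is to feed the constant $H \neq 0$ into part (a) of Theorem \ref{th:Prescribe H KGauss} and then recognize the resulting generatrices as Delaunay roulettes via the profile equation \eqref{eq: roulettes}. First I would evaluate the right-hand side of \eqref{eq:mom-H}: since $H$ is constant, $2\int x H\,dx = H x^2 + c$ for an integration constant $c \in \R$, whence $x\,\mathcal K(x) = Hx^2 + c$, that is
$$\mathcal K(x) = Hx + \frac{c}{x}.$$
Before using this I would isolate the circular cylinder, which is the one CMC rotational surface excluded from the momentum description (Remark \ref{cylinder}): it has constant $x = a$ and principal curvatures $0$ and $1/a$, hence constant mean curvature $1/(2a) \neq 0$, so it must appear in the statement on its own. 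For the non-cylindrical surfaces, the case $c = 0$ gives $\mathcal K(x) = Hx$, which is exactly the momentum of the sphere of radius $1/H$ by Proposition \ref{ex:rot surfaces}(3), recovering the limit case.

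The substantive step is the case $c \neq 0$, where I must show that the generatrix is a Delaunay roulette. Here I would pass from the momentum to the Cartesian profile using \eqref{differentials}: since $\dot x = \sqrt{1-\mathcal K^2}$ and $dz = \mathcal K\,ds$, one has $(dx/dz)^2 = (1-\mathcal K^2)/\mathcal K^2$. Substituting $\mathcal K(x) = (Hx^2 + c)/x$ and factoring $H^2$ out of $(Hx^2+c)^2 = H^2(x^2 + c/H)^2$ turns this into
$$\left(\frac{dx}{dz}\right)^2 = \frac{x^2/H^2 - (x^2 + c/H)^2}{(x^2 + c/H)^2}.$$
Matching this with \eqref{eq: roulettes} forces the identifications $4a^2 = 1/H^2$ (so $a = 1/(2|H|)$) and $\epsilon b^2 = c/H$. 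Thus $c > 0$ yields $\epsilon = +1$, the elliptic roulette, hence an onduloid, while $c < 0$ yields $\epsilon = -1$, the hyperbolic roulette, hence a nodoid; in either case $S_\alpha$ is a Delaunay surface. By Corollary \ref{cor:deter} these momenta determine the surfaces uniquely up to $z$-translations, so the list (cylinder, sphere, Delaunay surfaces) is exhaustive.

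The hard part will be the bookkeeping in the case $c \neq 0$: verifying that the parameter ranges coming from the constraint $\mathcal K(x)^2 < 1$ match precisely the admissible intervals for $x$ on the elliptic and hyperbolic roulettes, and checking the semi-axis inequality $a > b$ required for the ellipse. These verifications are routine but must be carried out carefully, so as to confirm that every value of $c$ genuinely produces a Delaunay roulette of \eqref{eq: roulettes} and, conversely, that all such roulettes arise this way, leaving no spurious or missing cases.
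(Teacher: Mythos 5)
Your proposal is correct and follows essentially the same route as the paper: solve \eqref{eq:mom-H} to get $\mathcal K(x)=Hx+c/x$, treat the cylinder (constant $x$) and the sphere ($c=0$) separately, and for $c\neq 0$ convert the momentum into the profile equation $(dx/dz)^2=(1-\mathcal K^2)/\mathcal K^2$ and match it to the roulette equation \eqref{eq: roulettes} with $4a^2=1/H^2$ and $\epsilon b^2=c/H$. The remaining parameter-range checks you flag are likewise left implicit in the paper, so nothing essential is missing.
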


\begin{proof}
Assume, without restriction, $H\equiv H_0>0$.
Then \eqref{eq:mom-H} leads to 
\begin{equation}\label{eq:K H0}
\mathcal K(x)=H_0 x + c/x, \ c\in \R.
\end{equation}
If $c=0$, $\mathcal K(x)=H_0 x$ and we get the sphere of radius $\frac{1}{H_0}$ from Proposition \ref{ex:rot surfaces}(3).
We must also take into account the case $x$ constant, because it produces the circular cylinder of radius $\frac{1}{2H_0}$ (see Remark \ref{cylinder}).
Being $x$ non constant, we look for the generatrix curve determined by \eqref{eq:K H0} applying Remark \ref{c}, and obtain:
$$
s=s(x)=\int \frac{x dx}{\sqrt{x^2-(H_0 x^2+c)^2}}
$$
and
$$
z=z(s)=\int \left(H_0 x(s)+\frac{c}{x(s)}\right)ds.
$$
From the above two expressions, we deduce that
$$
\left( \frac{dz}{dx} \right)^2 = \frac{(H_0 x^2+c)^2}{x^2-(H_0 x^2+c)^2}
$$
Comparing with \eqref{eq: roulettes}, we arrive at Delaunay roulettes considering 
$a=\frac{1}{2H_0}$ and $\epsilon b^2=\frac{c}{H_0}$, that is, if $c>0$, taking $\epsilon =1 $ and so $b^2=c/H_0$ and if $c<0$, taking $\epsilon =-1 $ and so $b^2=-c/H_0$. This finishes the proof.
\end{proof}

Now we deal with some applications of part (b) in Theorem \ref{th:Prescribe H KGauss}.
It is immediate to get the flat rotational surfaces, since $K_{\text G}\equiv 0$ implies that $\mathcal K$ must be constant, and we arrive at planes, cones and cylinders (see Proposition \ref{ex:rot surfaces}(1),(2) and Remark \ref{cylinder}). More interesting is the study (initiated by Darboux in 1890) of rotational surfaces with nonzero constant Gauss curvature. They are called \textit{Darboux surfaces} and we present them following the description in \cite{F93} and references therein.

Let $K_{\text G}\equiv K_0\neq 0$.
\begin{itemize}
\item[(i)] Positive Gaussian curvature $K_0=1/a^2$, $a>0$.

In cylindrical coordinates $(r,\theta,z)$, the surfaces are defined by
\begin{equation}\label{eq:Kpos}
r=a k \cos t, \quad 
z=a E(k,t):=a \int_0^t \sqrt{1-k^2 \sin^2 u}\, du, \ k>0,
\end{equation} 
where $E(k,\cdot)$ denotes the elliptic integral of second kind and modulus $k>0$ (\cite{BF71}).
The case $k=1$ correspond to the sphere and the shapes are different according to $0<k<1$ or $k>1$. See Figure \ref{fig:Kctepos}.

\begin{figure}[h!]
\begin{center}
\includegraphics[height=4.5cm]{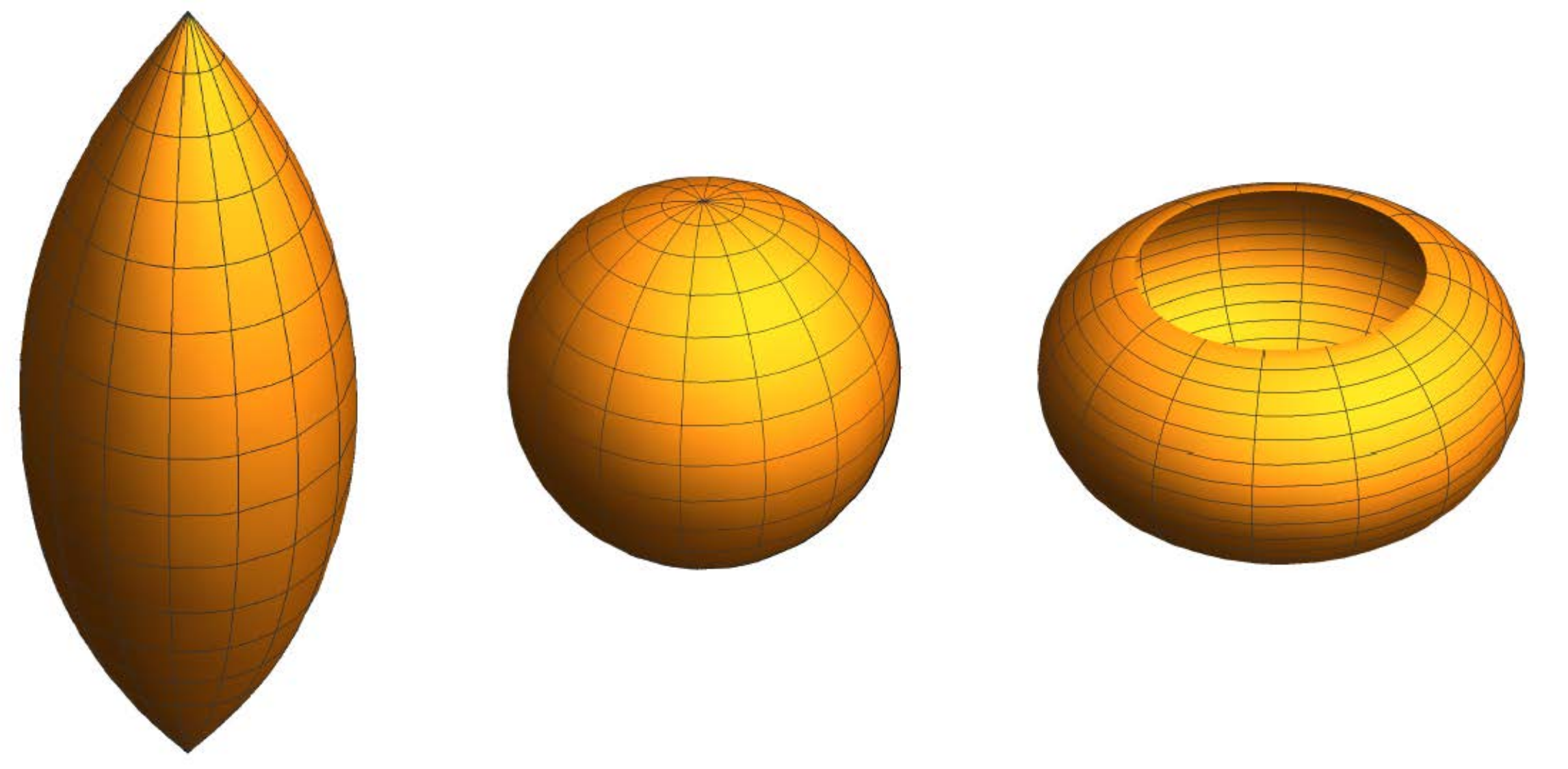}
\caption{Rotational surfaces with $K_{\text G}\equiv K_0> 0$.
 From left to right: $0<k<1$, $k=1$ (sphere), $k>1$.}
\label{fig:Kctepos}
\end{center}
\end{figure}

\item[(ii)] Negative Gaussian curvature $K_0=-1/a^2$, $a>0$.

There are also three kinds of surfaces, but this time with three different parametrizations.
See Figure \ref{fig:Kcteneg}.
\begin{itemize}
\item First type: surfaces with a conical point. In cylindrical coordinates, the surfaces are given by
\begin{equation}\label{eq:KnegI}
r=a k \sinh t, \quad z=a \int_0^t \sqrt{1-k^2 \cosh^2 u}\, du, \ 0<k<1.
\end{equation}
\item Second type: surfaces that look like a hyperboloid.
In cylindrical coordinates, the surfaces are described by
\begin{equation}\label{eq:KnegII}
r=a k \cosh t, \quad z=a \int_0^t \sqrt{1-k^2 \sinh^2 u}\, du , \ k\neq 0.
\end{equation}
\item Third type: the \textit{pseudosphere}. The pseudosphere is the surface of revolution generated by the rotation of a tractrix around its asymptote (see Proposition \ref{ex:rot surfaces}(7)). It was studied by Ferdinand Minding (1806-1885) and Eugene Beltrami in 1868.
\end{itemize}
\end{itemize}

\begin{figure}[h!]
\begin{center}
\includegraphics[height=4.5cm]{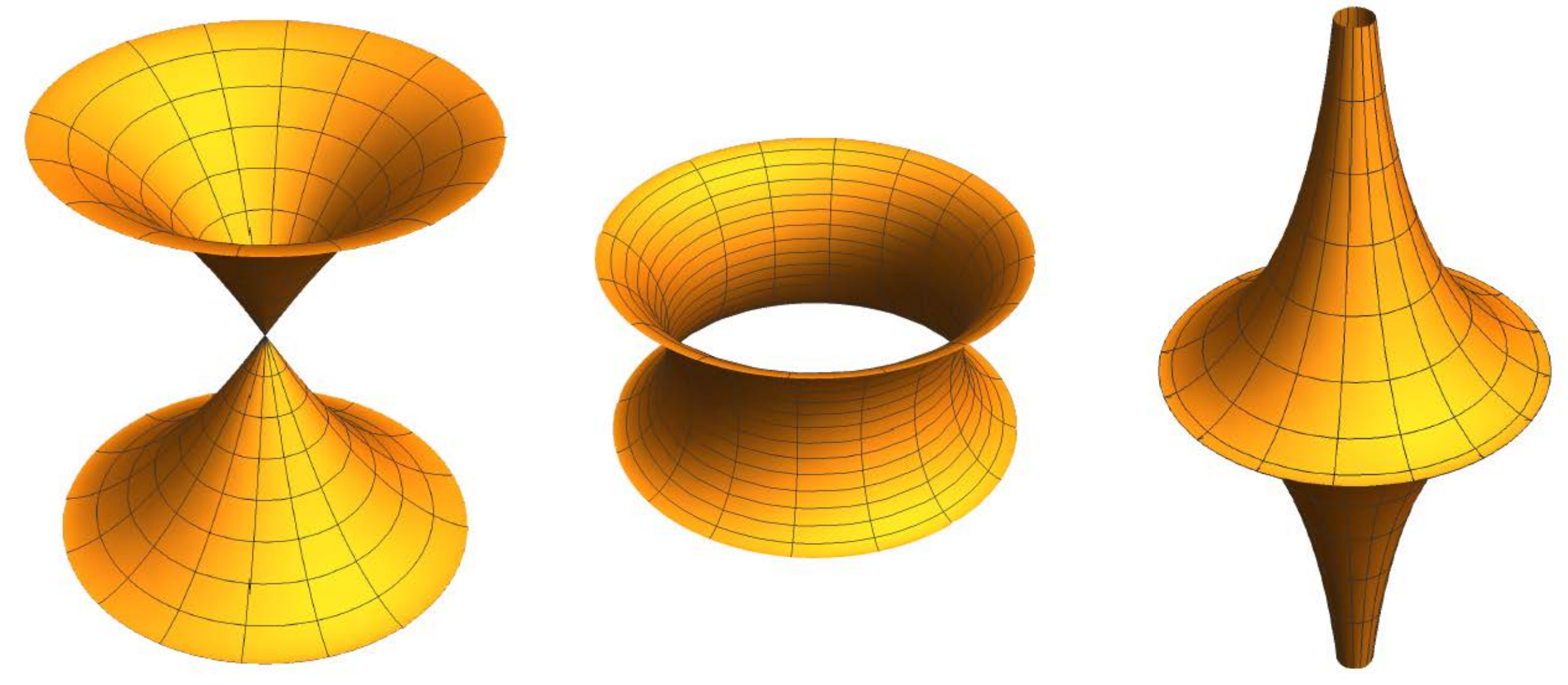}
\caption{Rotational surfaces with $K_{\text G}\equiv K_0< 0$.
 From left to right: surface with a conical point, surface of hyperboloid-type, pseudosphere.}
\label{fig:Kcteneg}
\end{center}
\end{figure}

Applying part(b) in Theorem \ref{th:Prescribe H KGauss}, we provide a new proof of the classification of rotational surfaces with nonzero constant Gaussian curvature.

\begin{corollary}\label{cor:Darboux Th}
The only nonzero constant Gauss curvature rotational surfaces are the Darboux surfaces.
\end{corollary}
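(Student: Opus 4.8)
The plan is to feed the hypothesis $K_{\text G}\equiv K_0\neq 0$ straight into part (b) of Theorem \ref{th:Prescribe H KGauss}. Since $K_0$ is constant, the integral in \eqref{eq:mom-KGauss} is immediate and produces the one-parameter family of geometric linear momenta
$$\mathcal{K}(x)^2 = K_0\, x^2 + c, \quad c\in\R.$$
By Corollary \ref{cor:deter}, each admissible value of $c$ (those for which $0\le \mathcal{K}(x)^2<1$ on a nondegenerate interval of $x>0$) determines, up to $z$-translations, exactly one rotational surface. So the problem reduces to recognizing which surface corresponds to each $c$, and matching these against the catalogue of Darboux surfaces described by \eqref{eq:Kpos}, \eqref{eq:KnegI}, \eqref{eq:KnegII} and Proposition \ref{ex:rot surfaces}(7).

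Following the same verification strategy as in the proof of Theorem \ref{th:W-cubic}, I would compute the geometric linear momentum of each Darboux surface directly from its parametrization via Remark \ref{no ppa} and check that it has the form $\mathcal{K}(x)^2 = K_0 x^2 + c$. Writing $K_0 = 1/a^2>0$ for the positive case, a short computation with $r=x=ak\cos t$ and $z=aE(k,t)$ gives $|\alpha'(t)|=a$ and $\mathcal{K}(t)=\sqrt{1-k^2\sin^2 t}$, which becomes $\mathcal{K}(x)^2 = x^2/a^2 + (1-k^2)$ after substituting $\sin^2 t = 1 - x^2/(a^2 k^2)$. This identifies $c=1-k^2$, recovering the sphere at $k=1$ (that is $c=0$) and the two shapes $0<k<1$, $k>1$ as $c$ ranges over $c<1$. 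For $K_0=-1/a^2<0$ the identical computation applied to \eqref{eq:KnegI} and \eqref{eq:KnegII} yields $\mathcal{K}(x)^2 = -x^2/a^2 + (1-k^2)$ (first type, $0<c<1$) and $\mathcal{K}(x)^2 = -x^2/a^2 + (1+k^2)$ (second type, $c>1$), while Proposition \ref{ex:rot surfaces}(7) supplies the pseudosphere with $\mathcal{K}(x)^2 = 1 - x^2/a^2$, the borderline value $c=1$.

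Finally I would assemble the dictionary: for $K_0>0$ the constraint $\mathcal{K}^2<1$ forces $c<1$, and every such $c$ corresponds to a surface of the form \eqref{eq:Kpos}; for $K_0<0$ the requirement $\mathcal{K}^2\ge 0$ with $x>0$ forces $c>0$, and the range splits as $0<c<1$, $c=1$, $c>1$ into exactly the three types of negative-curvature Darboux surfaces. Because Corollary \ref{cor:deter} guarantees that the momentum determines the surface uniquely up to $z$-translations, this matching is a bijection between the solutions of $\mathcal{K}(x)^2 = K_0 x^2 + c$ and the Darboux surfaces, which is precisely the assertion.

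I expect the only genuine care to lie in the bookkeeping rather than in any hard computation: one must track the admissibility constraints $0\le\mathcal{K}(x)^2<1$ to fix the correct interval of $x$ for each surface, and ensure that every value of $c$ is assigned to one and only one Darboux type. The limiting cases deserve separate mention, namely the sphere at $c=0$ when $K_0>0$ and especially the pseudosphere at $c=1$ when $K_0<0$, since the latter is specified by a description different from the elliptic-integral parametrizations \eqref{eq:KnegI}--\eqref{eq:KnegII} and so must be brought in through Proposition \ref{ex:rot surfaces}(7).
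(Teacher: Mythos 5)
Your proposal is correct, and the case bookkeeping (the reduction to $\mathcal K(x)^2=K_0x^2+c$, the split $c<1$ for $K_0>0$ and $0<c<1$, $c=1$, $c>1$ for $K_0<0$, and the separate treatment of the sphere and the pseudosphere) coincides exactly with the paper's. Where you diverge is in how the family of momenta is identified with the Darboux catalogue. The paper works in the ``derivation'' direction: it feeds $\mathcal K(x)=\pm\sqrt{K_0x^2+c}$ into \eqref{eq:zgraph} to get $z=z(x)$ as an explicit integral, and then performs the substitutions $x=ak\cos t$, $x=ak\sinh t$, $x=ak\cosh t$ to recover the parametrizations \eqref{eq:Kpos}, \eqref{eq:KnegI}, \eqref{eq:KnegII} and, for $c=1$, the Cartesian equation of the tractrix. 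You instead work in the ``verification'' direction: you compute the momentum of each listed Darboux surface from its parametrization via Remark \ref{no ppa} (your computations $|\alpha'(t)|=a$ and $\mathcal K(x)^2=\pm x^2/a^2+(1\mp k^2)$ all check out) and then invoke the uniqueness statement of Corollary \ref{cor:deter} to conclude the bijection. This is precisely the strategy the paper itself uses for Theorem \ref{th:W-cubic}, where the momenta of the quadrics are precomputed in Subsection \ref{Sect cubic}; applied here it spares you the integration and changes of variable in \eqref{eq:zgraph}, at the cost of having to differentiate each of the four catalogue parametrizations. Both routes are complete and rigorous; the paper's has the small advantage of actually reconstructing the generatrix from scratch rather than merely recognizing it, while yours is shorter and more uniform with the rest of the paper's ``momentum determines the surface'' philosophy. (In both arguments the sign ambiguity $\mathcal K=\pm\sqrt{K_0x^2+c}$ is absorbed by a reflection of the generatrix, which yields a congruent surface; you leave this implicit just as the paper does.)
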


\begin{proof}
Assume $K_{\text G}\equiv K_0\neq 0$.
Then \eqref{eq:mom-KGauss} leads to 
\begin{equation}\label{eq:K K0}
\mathcal K (x)^2 = K_0 x^2 + c, \ c\in \R.
\end{equation}
If $c=0$, $\mathcal K(x)^2=K_0 x^2$, $K_0>0$, and we get the sphere of radius $\frac{1}{\sqrt{K_0}}$ from Proposition \ref{ex:rot surfaces}(3).

Now we look for the rotational surface determined by \eqref{eq:K K0} (see Corollary \ref{cor:deter}). Applying \eqref{eq:zgraph}, we obtain:
\begin{equation}\label{eq:zKcte}
z=z(x)=\pm \int \frac{\sqrt{c+K_0 x^2}}{\sqrt{1-c-K_0 x^2}}.
\end{equation}

We must distinguish two cases according to the sign of $K_0$.
If $K_0=1/a^2>0$, we have that
\begin{equation}\label{eq:zKpos}
z=z(x)=\pm \int \frac{\sqrt{a^2c + x^2}}{\sqrt{a^2(1-c)- x^2}},
\end{equation}
with $c<1$ in this case. We put $c=1-k^2$, $k\in \R$, and
make the change of variable $x=ak \cos t $ in \eqref{eq:zKpos}. Then we arrive at $z=\mp a \int \sqrt{1-k^2 \sin^2 t}\, dt$. Looking at \eqref{eq:Kpos}, we recover the Darboux surfaces with positive constant Gauss curvature. We point out that $k=1$ (and then $c=0$) gives again the sphere of radius $a>0$ as then $z(x)=\mp \sqrt{a^2-x^2}$.

If $K_0=-1/a^2$, from \eqref{eq:zKcte} we get that 
\begin{equation}\label{eq:zKneg}
z=z(x)=\pm \int \frac{\sqrt{a^2c - x^2}}{\sqrt{a^2(1-c)+ x^2}},
\end{equation}
with $c>0$ in this case.

When $0<c<1$, we write $c=1-k^2$, $0<k<1$. 
Then we make the change of variable
$x=ak \sinh t $ in \eqref{eq:zKneg} and so we arrive at $z=\pm a \int \sqrt{1-k^2 \cosh^2 t}\, dt$. Looking at \eqref{eq:KnegI}, we recover the first type of Darboux surfaces with negative constant Gauss curvature.

When $c>1$, we can set $c=1+k^2$, $k\neq 0$.
Now we make the change of variable
$x=ak \cosh t $ in \eqref{eq:zKneg} and arrive at $z=\pm a \int \sqrt{1-k^2 \sinh^2 t}\, dt$. Looking at \eqref{eq:KnegII}, we recover the second type of Darboux surfaces with negative constant Gauss curvature.

When $c=1$, we simply have that $$z=z(x)=\pm \int \frac{\sqrt{a^2-x^2}}{x}dx=\pm \left( \sqrt{a^2-x^2}-a \ln \left( \frac{a+\sqrt{a^2-x^2}}{x} \right) \right),  $$
that is nothing but the Cartesian equation of the tractrix (cf.\ \cite{F93}), the generatrix curve of the pseudosphere. Anyway, from \eqref{eq:K K0} we deduce that $\mathcal K(x) = \sqrt{1-x^2/a^2}$ in this case. Then Proposition \ref{ex:rot surfaces}(7) also leads to the pseudosphere. This finishes the proof.

\end{proof}

\section*{Acknowledgements}

This publication is part of the R+D+i project PID2020-117868GB-I00,
financed by MCIN / AEI / 10.13039 / 501100011033 /. The second author is also supported by the EBM / FEDER UJA 2020 project 1380860.


\vspace{0.4cm}

\end{document}